\documentclass[11pt]{amsart}
\usepackage{amsfonts,amssymb,amsmath}
\usepackage{latexsym}
\usepackage{color}
\usepackage{graphicx}
\usepackage{epsfig}
\usepackage{hyperref}

\usepackage[active]{srcltx}

\newtheorem{theorem}{Theorem}[section]
\newtheorem{lemma}[theorem]{Lemma}
\newtheorem{proposition}[theorem]{Proposition}
\newtheorem{corollary}[theorem]{Corollary}

\newtheorem{example}[theorem]{\sf Example}
\newtheorem{observation}[theorem]{Observation}

\textwidth=16.5cm \evensidemargin=0pt \oddsidemargin=0pt

\newcommand {\C}    {{\mathbb C}}

\newcommand {\R}    {{\mathbb R}}
\newcommand {\N}    {{\mathbb N}}

\newcommand {\ag}    {\mathfrak{a}}
\newcommand {\bg}    {\mathfrak{b}}

\newcommand{\pa}{\partial}

\newcommand{\non}{\nonumber}

\begin{document}

\date{\today}

\title[Poly-analytic functions {\bf AND} representation theory]{Poly-analytic functions AND representation theory}
\author{Alexander {V} Turbiner}
\address{UNAM, Mexico-city, Mexico}
\email{turbiner@nucleares.unam.mx}
\author {Nikolai {L} Vasilevski}
\address{Cinvestav, Mexico-city, Mexico}
\email{nvasilev@cinvestav.mx}

\begin{abstract}
We propose the Lie-algebraic interpretation of poly-analytic functions in $L_2(\C,d\mu)$, with the Gaussian measure $d\mu$, based on a flag structure formed by the representation spaces of the $\mathfrak{sl}(2)$-algebra realized by differential operators in $z$ and $\bar z$. Following the pattern of the one-dimensional situation, we define poly-Fock spaces in $d$ complex variables in a Lie-algebraic way, 
as the invariant spaces for the action of generators of a certain Lie algebra. In addition to the basic case of the algebra $\mathfrak{sl}(d+1)$, we consider also the family of algebras $\mathfrak{sl}(m_1+1) \otimes  \ldots  \otimes \mathfrak{sl}(m_n+1)$ for tuples
$\mathbf{m} = (m_1,m_2,\ldots,m_n)$
of positive integers whose sum is equal to $d$.
\end{abstract}

\maketitle

\section{Introduction}

The paper deals with some aspects of the theory of the so-called poly-analytic functions (see e.g. \cite{Abreu,Balk,Hadelnmalm} and the literature cited therein). Recall that, in case of one complex variable, the poly-analytic functions of order $k$ are those that satisfy the iterated Cauchy-Riemann equation
\begin{equation*}
 \left(\frac{\partial}{\partial \overline{z}}\right)^k f = 0\ ,\ k \in \mathbb{N}.
\end{equation*}
One of the important and interesting questions here is as follows. {\it Given a domain $\Omega \subset \C^d$, $d \geq 1$, consider the corresponding (weighted) Hilbert space $L_2(\Omega,d\mu)$. How the poly-analytic functions are located inside $L_2(\Omega,d\mu)$, and whether and how the Hilbert space $L_2(\Omega,d\mu)$ can be made of the sets of poly-analytic function of different orders} (see e.g. \cite{Karlovich,Vasilev99,Vasilev00,Vasilev06}). This question is more challenging in the case of several complex variables, as by now there is no commonly accepted understanding how to define poly-analytic functions in this situation.

In the paper our Hilbert space is the standard $L_2(\C^d,d\mu)$, $d \geq 1$, with the Gaussian measure. The essence of our results is that the spaces of poly-analytic functions (both in one- and multidimensional cases) can be identified with the invariant subspaces under the action of certain Lie algebras.

In Section \ref{oneFock} we consider in detail the one-dimensional case of \cite[Section 2]{Vasilev00} and show that for each $k \in \mathbb{N}$ the poly-analytic space of order $k$, $k$-poly-Fock space $F^2_k(\C)$, can be alternatively defined as the common invariant subspace for the action the operators $J^+_k$, $J^0_k$, $J^-_k$ in $L_2(\C,d\mu)$,
\begin{eqnarray*}
   J^+_k &=&  \left(\overline{z}-\textstyle{\frac{\partial}{\partial z}}\right)\left(\overline{z} \textstyle{\frac{\partial}{\partial \overline{z}}} - \frac{\partial^2}{\partial z \partial \overline{z}}- (k-1)I\right),
\\
   J^0_k &=&  \overline{z} \textstyle{\frac{\partial}{\partial \overline{z}}} - \textstyle{\frac{\partial^2}{\partial z \partial \overline{z}}}- \textstyle{\frac{k-1}{2}}I,
\\
   J^-_k &=& \textstyle{\frac{\partial}{\partial \overline{z}}},
   \end{eqnarray*}
cf. \cite[Formulas (A.1.5)]{t-sl22}, obeying the $\mathfrak{sl}(2)$-algebra commutation relations.

It turns out that the situation of Section \ref{oneFock} is in fact a particular case of a general construction based on the Fock space formalism extended to the case of infinite dimensional space generated by vacuum vectors. These results are presented in Section \ref{se:overture}.

In final Section \ref{multiFock} we define poly-Fock spaces in $d$ complex variables as the representation spaces for $\mathfrak{sl}(d+1)$ (Subsection \ref{homog}), and for $\mathfrak{sl}(m_1+1) \otimes  \ldots  \otimes \mathfrak{sl}(m_n+1)$ for a given tuple
$\mathbf{m} = (m_1,m_2,\ldots,m_n)$
of natural numbers whose sum is equal to $d$, $m_1 +m_2+\ldots +m_n=d$ (Subsection \ref{quasi-homog}).

\section{ Fock space formalism}
\label{se:overture}

Introduce the three-dimensional Heisenberg algebra $\mathbb{H}_3 = \{ \ag,\bg,1 \}$ with commutator $[\ag, \bg]\ = \ 1$ and $[\ag, 1]=[\bg, 1]=0$. \\
A formal construction of the Fock space, see e.g. \cite[Chapter 5, Section 5.2]{BerezinShubin},  is given by the representation of the algebra $\mathbb{H}_3$ on a separable Hilbert space $\mathcal{H}$. That is, there are operators $\ag$ and $\bg$ acting in $\mathcal{H}$ that satisfy the above commutation relations, and there also is a normalized element $|0\rangle :=\Phi_0 \in \mathcal{H}$, $\|\Phi_0\|=1$, called the vacuum vector, such that $\ag \Phi_0 =0$ and the linear span of elements $\bg^n\Phi_0$ with $n \in \mathbb{Z}_+$ is dense in~$\mathcal{H}$. The Hilbert space $\mathcal{H}$ obeying the above properties can be called the \emph{Fock space related to  $\ag$ and $\bg$}, or \emph{$(\ag,\bg)$-Fock space}.

The problems of complex analysis related to the study of the so-called poly-analytic functions, i.e., those smooth functions that satisfy the equation
$$\left(\frac{\partial}{\partial \overline{z}}\right)^k f = 0\ ,\ k \in \mathbb{N}\ ,$$
motivate us to deal with a more general situation: There is a space $L_1$ spanned by vacuum vectors, i.e., $\ag h = 0$, for all $h \in L_1$, and the linear span of all elements from all spaces $L_n := \bg^{n-1} L_1$, $n \in \mathbb{N}$ is dense in $\mathcal{H}$.  Note that generically the elements from different spaces $L_n$ are not orthogonal.
The previous situation corresponds to the case when $L_1$ is one-dimensional and is generated by $\Phi_0$.

Again, motivated by the study of the poly-analytic functions, we define the $k$-poly-$\ag$-space $\mathcal{H}_k$ as the closure of all elements of $\mathcal{H}$ that satisfy the equation
\begin{equation} \label{a^k}
 \ag^k h = 0\ .
\end{equation}
Easily verified relation
\begin{equation} \label{[a,b]}
 [\ag,\bg^{n-1}] = (n-1)\bg^{n-2}
\end{equation}
implies that for each $h_1 \in L_1$ the element $h_n\  \equiv\ \bg^{n-1}h_1$ belongs to $L_n$. It follows from
\begin{eqnarray*}
 \ag^k h_n &=& \ag^k\bg^{n-1}h_1 = \ag^{k-1} \left(\ag\bg^{n-1}\right)h_1 = \ag^{k-1}\left(\bg^{n-1}\ag + (n-1)\bg^{n-2}\right)h_1 \\
 &=&  (n-1)\ag^{k-1}\bg^{n-2}h_1 = \ldots = (n-1)\cdots(n-(k-1))\ag \bg^{n-k}h_1
\end{eqnarray*}
for $k=n$.
Thus $\mathcal{H}_k = \mathrm{closure}(L_1 + L_2 + \ldots + L_k)$. Note that \eqref{a^k} is equivalent to
\begin{equation}
\label{bk-ak}
    \bg^k\ag^k h\ =\ 0\ .
\end{equation}
By induction on $k$ one can show, see \cite{Turbiner:1994}, that
\begin{equation*}
 \bg^k\ag^k\, = \, \prod_{m=0}^{k-1}\left(\bg \ag - m\right),
\end{equation*}
where the right hand side is a product of the so-called Euler-Cartan operators
\[
   I_0^{(m)}\ =\  \bg \ag - m,     \quad m=0,1,\ldots k-1\ .
\]
This reveals a Lie-algebraic nature of the $k$-poly-$\ag$-space $\mathcal{H}_k$.
Indeed, consider three (generically unbounded densely defined in $\mathcal{H}$) operators, see
\cite[Formulas (A.1.5)]{t-sl22}
\begin{eqnarray} \label{sl2ag}
   J^+_k &=&  \bg^2 \ag - (k-1)\, \bg , \non \\
   J^0_k &=&  \bg \ag - \textstyle{\frac{k-1}{2}} , \\
   J^-_k &=& \ag \non .
\end{eqnarray}
For all $k \in \C$ these operators obey the $\mathfrak{sl}(2)$-algebra commutation relations $[J^-_k, J^+_k]=2 J^0_k$ and $[J^{\pm}_k, J^0_k]= \mp J^{\pm}_k$.
Restricting $k$ to positive integers, it is easily seen that the space
\begin{equation*}
 V_k := L_1 + L_2 + \ldots + L_k
\end{equation*}
is invariant under the action of the operators $J^+_k$, $J^0_k$ and $J^-_k$.

So the $k$-poly-$\ag$-spaces $\mathcal{H}_k$ can be defined alternatively as the closure of the invariant subspaces $V_k$ for the action the operators $J^+_k$,
$J^0_k$, $J^-_k$ in $\mathcal{H}$, obeying the $\mathfrak{sl}(2)$-algebra commutation relations.

\medskip

The situation becomes much more transparent and substantial in case when the operators $\ag$, $\bg$ are identified with the lowering and raising operators $\ag$ and $\ag^\dag$, i.e., when additionally the operator $\ag^\dag$ is adjoint to
$\ag$.

Before we proceed with its description let us give three examples, starting with a very classical one.

\begin{example}{\rm (See e.g. \cite[Chapter 5, Section 5.2]{BerezinShubin}) \
 Take $\mathcal{H} = L_2(\mathbb{R})$, choose $\ag = \frac{1}{\sqrt{2}}(x + \frac{d}{dx})$, $\ag^\dag = \frac{1}{\sqrt{2}}(x - \frac{d}{dx})$ as the lowering and raising operators, and the vacuum vector $\Phi_0 = (\pi)^{1/4} e^{-x^2/2}$ being the ground state of the harmonic oscillator.  The linear span of functions $(\ag^\dag)^n \Phi_0\,=\,c_n\,H_n (x)\, e^{-x^2/2}$, $n \in \mathbb{Z}_+$, with an appropriate constant $c_n$, is dense in $L_2(\mathbb{R})$, and the $k$-poly-$\ag$-space $\mathcal{H}_k$ coincides with the finite dimensional space of weighted Hermite polynomials $H_p (x)$,
\begin{equation*}
  e^{-x^2/2} \sum_{p=0}^{k-1} a_p H_p (x)\ .
\end{equation*} }
\end{example}

\begin{example} {\rm (See e.g. \cite[Chapter 5, Section 5.2]{BerezinShubin}) \
 Take $\mathcal{H} = F_2(\mathbb{C})$ being the space of all anti-analytic functions $f(\overline{z})$ endowed with the scalar product
 \begin{equation*}
  \langle f, g \rangle = \frac{1}{\pi} \int_{\mathbb{C}} f(\overline{z})\overline{g(\overline{z})} e^{-|z|^2} dxdy, \quad \overline{z}=x-iy.
 \end{equation*}
Then $\ag = \frac{\partial}{\partial \overline{z}}$, $\ag^\dag = \overline{z}$, and $\Phi_0 =1$. The $k$-poly-$\ag$-spaces $\mathcal{H}_k$ coincides with the finite dimensional space of  polynomials on $\overline{z}$ of degree not greater that $k-1$.
}
\end{example}

\begin{example} \ {\rm
Take $\mathcal{H} =L_{2}(\C,d\mu)$ of square-integrable functions
on $\C$ with the Gaussian measure
\begin{equation*}
   d\mu(z)=\pi^{-1}\,e^{-z\cdot {\bar z}}dv(z)\ ,
\end{equation*}
Then \cite[Formula (2.4)]{Vasilev00}, take $\ag = \frac{\partial}{\partial \overline{z}}$, $\ag^\dag = -\frac{\partial}{\partial z} + \overline{z}$, while the role of the vacuum vector $\Phi_0$ can be played by any normalized analytic function from $L_{2}(\C,d\mu)$. This example is considered in more detail in the next section.
}
\end{example}

So we are now in the following setup. There are a separable Hilbert space $\mathcal{H}$, two mutually adjoint lowering and raising operators $\ag$ and $\ag^\dag$ having common domain dense in $\mathcal{H}$, and a linear subspace $L_1$ being the kernel of $\ag$. Furthermore, the operators $\ag$ and $\ag^\dag$ obey the relation
\begin{equation} 
\label{commutator}
 [\ag, \ag^\dag]\ = \ I,
\end{equation}
and the union of all spaces $L_n := (\ag^\dag)^{n-1} L_1$ is dense in $\mathcal{H}$.

Then the $k$-poly-$\ag$-space $\mathcal{H}_k$, defined through equation \eqref{a^k}, coincides with $\mathrm{closure}(L_1 + L_2 + \ldots + L_k)$, where $L_n = (\ag^\dag)^{n-1}L_1$. A simple application of the relation $[\ag,(\ag^\dag)^n] = n(\ag^\dag)^{n-1}$ yields the following proposition.
\begin{proposition} \label{a_a+}
Different subspaces $L_n$ and $L_m$ are orthogonal.\\
For each $k = 2,3, ...$, the raising operator
\begin{equation*}
  \textstyle{\frac{1}{\sqrt{k-1}}}\,\ag^\dag|_{\overline{L}_{k-1}} : \overline{L}_{k-1} \ \longrightarrow \ \overline{L}_{k}
\end{equation*}
is an isometric isomorphism, and the lowering operator
\begin{equation*}
   \textstyle{\frac{1}{\sqrt{k-1}}}\,\ag|_{\overline{L}_{k}} : \overline{L}_{k} \ \longrightarrow \ \overline{L}_{k-1} ,
\end{equation*}
is its inverse. Here $\overline{L}_{n}:= \mathrm{closure}(L_n)$.
\end{proposition}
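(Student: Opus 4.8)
The plan is to prove the two assertions in turn, first orthogonality, then the isometry, using only the canonical commutation relation \eqref{commutator} and the standard consequence $[\ag,(\ag^\dag)^n] = n(\ag^\dag)^{n-1}$, which itself follows by an easy induction from \eqref{commutator}. For orthogonality of $L_n$ and $L_m$, I would take generating elements $(\ag^\dag)^{n-1}h$ and $(\ag^\dag)^{m-1}g$ with $h,g \in L_1$, assume without loss of generality $m \le n$, and compute the inner product
\[
 \bigl\langle (\ag^\dag)^{n-1}h,\ (\ag^\dag)^{m-1}g \bigr\rangle
 = \bigl\langle h,\ \ag^{n-1}(\ag^\dag)^{m-1}g \bigr\rangle
\]
by moving all the $\ag^\dag$'s across using the commutator. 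When $m < n$, the operator $\ag^{n-1}(\ag^\dag)^{m-1}$ reduces, after commuting, to a polynomial in $\ag$ and $\ag^\dag$ in which every surviving term still has at least one factor of $\ag$ acting first on $g$ (since there are strictly more $\ag$'s than $\ag^\dag$'s), hence kills $g \in L_1 = \ker \ag$; so the inner product vanishes. Passing to closures then gives $\overline L_n \perp \overline L_m$ for $n \ne m$.

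For the isometry statement, the key computation is the explicit normalization identity on $L_1$: for $h,g \in L_1$,
\[
 \bigl\langle (\ag^\dag)^{k-1}h,\ (\ag^\dag)^{k-1}g \bigr\rangle
 = \bigl\langle h,\ \ag^{k-1}(\ag^\dag)^{k-1}g \bigr\rangle
 = (k-1)!\,\langle h, g\rangle,
\]
which follows by repeatedly applying $[\ag,(\ag^\dag)^j] = j(\ag^\dag)^{j-1}$: each time one pulls an $\ag$ through the block $(\ag^\dag)^{k-1}$, the only term that is not eventually annihilated by $\ag g = 0$ is the one picking up the scalar, and an induction shows the accumulated scalar is $(k-1)\cdot(k-2)\cdots 1 = (k-1)!$. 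Wait — I should be careful with the bookkeeping here: the cleanest route is to prove by induction on $k$ that $\ag^{k-1}(\ag^\dag)^{k-1}$ acts as multiplication by $(k-1)!$ on $L_1$, using $\ag(\ag^\dag)^{k-1} = (\ag^\dag)^{k-1}\ag + (k-1)(\ag^\dag)^{k-2}$ and the inductive hypothesis applied to the second summand. This identity simultaneously shows that $(\ag^\dag)^{k-1}|_{L_1}$ is injective with $\|(\ag^\dag)^{k-1}h\|^2 = (k-1)!\,\|h\|^2$, i.e. $\tfrac{1}{\sqrt{(k-1)!}}(\ag^\dag)^{k-1}$ is an isometry $L_1 \to L_k$; it is onto $L_k$ by the very definition $L_k = (\ag^\dag)^{k-1}L_1$.

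To get the statement as phrased — an isometric isomorphism $\tfrac{1}{\sqrt{k-1}}\,\ag^\dag : \overline L_{k-1} \to \overline L_k$ — I would combine the $L_1$-level identity with the previous one for exponent $k-2$: for $u = (\ag^\dag)^{k-2}h$ and $v = (\ag^\dag)^{k-2}g$ in $L_{k-1}$,
\[
 \langle \ag^\dag u, \ag^\dag v\rangle = \bigl\langle (\ag^\dag)^{k-1}h, (\ag^\dag)^{k-1}g\bigr\rangle = (k-1)!\,\langle h,g\rangle
 = (k-1)\bigl\langle (\ag^\dag)^{k-2}h, (\ag^\dag)^{k-2}g\bigr\rangle = (k-1)\langle u,v\rangle,
\]
so $\tfrac{1}{\sqrt{k-1}}\,\ag^\dag$ is an isometry on $L_{k-1}$, extending by continuity to $\overline L_{k-1}$, with range dense in $\overline L_k$ hence (being closed, as the isometric image of a complete space) equal to $\overline L_k$. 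Finally, for the inverse: on $L_k$, $\ag(\ag^\dag)^{k-1}h = (k-1)(\ag^\dag)^{k-2}h + (\ag^\dag)^{k-1}\ag h = (k-1)(\ag^\dag)^{k-2}h$ since $\ag h = 0$, so $\tfrac{1}{\sqrt{k-1}}\,\ag$ maps $L_k$ back onto $L_{k-1}$ and is a left inverse of $\tfrac{1}{\sqrt{k-1}}\,\ag^\dag$; a two-sided inverse of a surjective isometry is automatically its inverse. I expect the main obstacle to be purely bookkeeping: getting the induction on the commutator identities and the resulting factorials exactly right, and being careful that $\ag,\ag^\dag$ are unbounded, so that all adjoint manipulations take place on the common dense domain and the passage to closures is justified by the isometry bound rather than assumed.
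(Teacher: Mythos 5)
Your proposal is correct and follows essentially the same route as the paper: orthogonality and the isometry are both obtained by moving $\ag^\dag$'s across the inner product via $[\ag,(\ag^\dag)^n]=n(\ag^\dag)^{n-1}$ together with $\ag|_{L_1}=0$, then extending by continuity to the closures. The only cosmetic differences are that you route the isometry through the identity $\ag^{k-1}(\ag^\dag)^{k-1}=(k-1)!\,I$ on $L_1$ and obtain the lowering map as the inverse of a surjective isometry, whereas the paper verifies the norm identity for $\tfrac{1}{\sqrt{k-1}}\ag$ by a second direct computation.
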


\begin{proof}
 Take $h_n = (\ag^\dag)^{n-1}h_1 \in L_n$, $g_m = (\ag^\dag)^{m-1}g_1 \in L_m$, and let $n > m$. Then
 \begin{eqnarray*}
  \langle h_n, g_m \rangle &=& \langle (\ag^\dag)^{n-1}h_1, (\ag^\dag)^{m-1}g_1 \rangle =
  \langle (\ag^\dag)^{n-m-1}h_1, \ag^{m}(\ag^\dag)^{m-1}g_1 \rangle \\
  &=& \langle (\ag^\dag)^{n-m-1}h_1, (m-1)!\,\ag\, g_1\rangle\ =\ 0\ .
 \end{eqnarray*}
Take now $h_{k-1} = (\ag^\dag)^{k-2}h_1 \in L_{k-1}$ and calculate
\begin{eqnarray*}
 \left\langle \textstyle{\frac{1}{\sqrt{k-1}}} \ag^\dag h_{k-1}, \textstyle{\frac{1}{\sqrt{k-1}}} \ag^\dag h_{k-1} \right\rangle &=& \frac{1}{k-1} \langle (\ag^\dag)^{k-1}h_1, (\ag^\dag)^{k-1}h_1 \rangle = \frac{1}{k-1} \langle (\ag^\dag)^{k-2}h_1, \ag(\ag^\dag)^{k-1}h_1 \rangle \\
 &=& \frac{1}{k-1} \langle (\ag^\dag)^{k-2}h_1, (k-1)(\ag^\dag)^{k-2}h_1 \rangle\ =\ \langle h_{k-1}, h_{k-1} \rangle\ .
\end{eqnarray*}
That is, the operator $\textstyle{\frac{1}{\sqrt{k-1}}}\,\ag^\dag$ is an isometric isomorphism of $L_{k-1}$ onto $L_k$, which extends by continuity onto the closure of these spaces.

Finally, take $h_{k} = (\ag^\dag)^{k-1}h_1 \in L_{k}$ and calculate
\begin{eqnarray*}
 \left\langle \textstyle{\frac{1}{\sqrt{k-1}}} \ag h_{k}, \textstyle{\frac{1}{\sqrt{k-1}}} \ag h_{k} \right\rangle &=& \frac{1}{k-1} \langle \ag (\ag^\dag)^{k-1}h_1, \ag(\ag^\dag)^{k-1}h_1 \rangle = \frac{1}{k-1} \langle (\ag^\dag)^{k-1}h_1, \ag^\dag \ag(\ag^\dag)^{k-1}h_1 \rangle \\
 &=& \frac{1}{k-1} \langle (\ag^\dag)^{k-1}h_1, (k-1)(\ag^\dag)^{k-1}h_1 \rangle \ =\ \langle h_{k}, h_{k} \rangle\ ,
\end{eqnarray*}
and the result follows.
\end{proof}

The proposition implies that the $k$-poly-$\ag$-space $\mathcal{H}_k$ admits the representation
\begin{equation*}
 \mathcal{H}_k\ =\ \overline{L}_{k}  \oplus \overline{L}_{k-1} \oplus \ldots \oplus \overline{L}_{1} = \overline{L}_{k}  \oplus \mathcal{H}_{k-1}\ .
\end{equation*}
Following the notion introduced in the theory of the poly-analytic spaces \cite{Vasilev99,Vasilev00}, we define the true-$k$-poly-$\ag$-space $\mathcal{H}_{(k)}$ as
\begin{equation*}
 \mathcal{H}_{(k)} := \mathcal{H}_k \ominus \mathcal{H}_{k-1} = (\ag^{\dag})^{k-1}\mathrm{closure}(L_1) \quad \mathrm{and} \quad \mathcal{H}_{(1)}:= \mathcal{H}_1\ =\ \mathrm{closure}(L_1)\ ,
\end{equation*}
in other words
\begin{equation}
\label{decomH_k}
 \mathcal{H}_k = \bigoplus_{\ell=1}^k \mathcal{H}_{(\ell)} \quad \mathrm{with} \quad \mathcal{H}_{(\ell)} = (\ag^{\dag})^{\ell-1} \mathcal{H}_1 \ .
\end{equation}
Iterating the statements of Proposition \ref{a_a+} we come to the following

\begin{corollary} \label{A_k}
 And for each $k \in \N$, the operator
\begin{equation} \label{1->k}
  \mathbf{A}_{(k)} := \textstyle{\frac{1}{\sqrt{(k-1)!}}}\, (\ag^\dag)^{\,k-1}|_{\mathcal{H}_1}  : \mathcal{H}_1 \ \longrightarrow \ \mathcal{H}_{(k)} ,
\end{equation}
gives an isometric isomorphism between $\mathcal{H}_1$ and the true-$k$-poly-$\ag$-space $\mathcal{H}_{(k)}$,
and the operator
\begin{equation} \label{k->1}
  \mathbf{A}_{(k)}^{-1} := \textstyle{\frac{1}{\sqrt{(k-1)!}}}\,{\ag}^{k-1}|_{\mathcal{H}_{(k)}} : \mathcal{H}_{(k)}\ \longrightarrow  \mathcal{H}_1 ,
\end{equation}
is an inverse isomorphism.
\end{corollary}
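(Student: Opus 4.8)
The statement is obtained by iterating Proposition \ref{a_a+}, and the only real bookkeeping is tracking the normalization constants. First I would record the two identifications that make the statement meaningful: by definition $\mathcal{H}_1 = \overline{L}_1$, while \eqref{decomH_k} together with the isometry part of Proposition \ref{a_a+} gives $\mathcal{H}_{(k)} = (\ag^\dag)^{k-1}\overline{L}_1 = \overline{L}_k$. Here $(\ag^\dag)^{k-1}$ is first applied to $L_1$ — which lies in its domain by the standing assumption $L_n = (\ag^\dag)^{n-1}L_1$ — and then extended continuously; since $\textstyle{\frac{1}{\sqrt{(k-1)!}}}(\ag^\dag)^{k-1}$ will turn out to be isometric on $L_1$, its continuous extension has closed range $\overline{L}_k$, so $\mathbf{A}_{(k)}$ of \eqref{1->k} is a well-defined bounded operator from $\mathcal{H}_1$ to $\mathcal{H}_{(k)}$.

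Next I would factor the normalized power of the raising operator as a composition of the elementary maps furnished by Proposition \ref{a_a+}:
\[
  \textstyle{\frac{1}{\sqrt{(k-1)!}}}\,(\ag^\dag)^{k-1}|_{L_1} \ = \ \Big(\textstyle{\frac{1}{\sqrt{k-1}}}\,\ag^\dag|_{\overline{L}_{k-1}}\Big)\circ \cdots \circ \Big(\textstyle{\frac{1}{\sqrt{2}}}\,\ag^\dag|_{\overline{L}_2}\Big)\circ\Big(\textstyle{\frac{1}{\sqrt{1}}}\,\ag^\dag|_{\overline{L}_1}\Big),
\]
which is correct because $\prod_{j=1}^{k-1}\sqrt{j} = \sqrt{(k-1)!}$. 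By Proposition \ref{a_a+} each factor $\textstyle{\frac{1}{\sqrt{j}}}\,\ag^\dag|_{\overline{L}_j}\colon \overline{L}_j \to \overline{L}_{j+1}$ is an isometric isomorphism; a finite composition of isometric isomorphisms is again one, with domain $\overline{L}_1 = \mathcal{H}_1$ and range $\overline{L}_k = \mathcal{H}_{(k)}$. This proves that $\mathbf{A}_{(k)}$ is an isometric isomorphism of $\mathcal{H}_1$ onto $\mathcal{H}_{(k)}$. Equivalently, one may run a one-line induction on $k$ using $\mathbf{A}_{(k)} = \textstyle{\frac{1}{\sqrt{k-1}}}\,\ag^\dag|_{\overline{L}_{k-1}}\circ\mathbf{A}_{(k-1)}$, the base case $k=1$ being trivial.

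For the inverse I would compose, in the reverse order, the lowering maps $\textstyle{\frac{1}{\sqrt{j}}}\,\ag|_{\overline{L}_{j+1}}\colon \overline{L}_{j+1}\to\overline{L}_j$, each of which Proposition \ref{a_a+} identifies as the two-sided inverse of $\textstyle{\frac{1}{\sqrt{j}}}\,\ag^\dag|_{\overline{L}_j}$. The resulting operator is $\textstyle{\frac{1}{\sqrt{(k-1)!}}}\,\ag^{k-1}|_{\mathcal{H}_{(k)}}$, hence it equals $\mathbf{A}_{(k)}^{-1}$, which is \eqref{k->1}. The one point requiring a little care — and the only place where the unboundedness of $\ag$ and $\ag^\dag$ could in principle intervene — is to check that at every stage one composes operators acting between the closed subspaces $\overline{L}_j$, on which both $\ag^\dag$ and $\ag$ restrict to bounded operators; this is exactly what Proposition \ref{a_a+} guarantees, so no domain subtleties arise and the argument stays elementary.
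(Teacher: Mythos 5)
Your proposal is correct and coincides with the paper's intended argument: the paper offers no separate proof, stating only that the corollary follows by ``iterating the statements of Proposition \ref{a_a+},'' which is precisely the composition of the elementary isometries $\frac{1}{\sqrt{j}}\,\ag^\dag|_{\overline{L}_j}$ (and their inverses) that you write out, together with the normalization bookkeeping $\prod_{j=1}^{k-1}\sqrt{j}=\sqrt{(k-1)!}$ and the identification $\mathcal{H}_{(k)}=\overline{L}_k$. Your write-up simply makes that iteration explicit.
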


Thus, the true-$\ell$-poly-$\ag$-spaces are nothing but the  components of the orthogonal decomposition \eqref{decomH_k} of the  $k$-poly-$\ag$-spaces $\mathcal{H}_{k}$, each component is isomorphic to the closure of the space of vacuum vectors and is obtained by the ``lifting up'' this closure by the normalised powers of the raising operator $\ag^\dag$.
Furthermore, the density in $\mathcal{H}$ of the union of all spaces $L_k$ implies
\begin{proposition}
\label{prop:+}
 The following direct sum decomposition of $\mathcal{H}$ holds
 \begin{equation*}
  \mathcal{H}\ =\ \bigoplus_{k=1}^{\infty} \mathcal{H}_{(k)}\ .
 \end{equation*}
\end{proposition}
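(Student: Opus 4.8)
The plan is to reduce the statement to the standard Hilbert-space fact that a family of pairwise orthogonal closed subspaces whose union has dense linear span produces an orthogonal direct sum decomposition of the ambient space. All the ingredients are already available, so the proof is essentially a matter of assembling them correctly.

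First I would identify, for each $k\in\N$, the true-$k$-poly-$\ag$-space $\mathcal{H}_{(k)}$ with the closure $\overline{L}_k$ of $L_k$. By \eqref{decomH_k} we have $\mathcal{H}_{(k)} = (\ag^\dag)^{k-1}\mathcal{H}_1$, while $\mathcal{H}_1 = \mathrm{closure}(L_1)$ and $L_k = (\ag^\dag)^{k-1}L_1$. Since by Corollary \ref{A_k} the operator $\tfrac{1}{\sqrt{(k-1)!}}(\ag^\dag)^{k-1}|_{\mathcal{H}_1}$ is an isometric isomorphism of $\mathcal{H}_1$ onto $\mathcal{H}_{(k)}$, it is in particular bounded, hence it carries $\mathrm{closure}(L_1)$ onto the closure of its image on $L_1$; this gives $\mathcal{H}_{(k)} = \overline{L}_k$.

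Next I would invoke the orthogonality assertion of Proposition \ref{a_a+}: the subspaces $\overline{L}_n$ and $\overline{L}_m$ are mutually orthogonal whenever $n\neq m$, hence so are the closed subspaces $\mathcal{H}_{(k)}$. Then, since $L_k\subset\mathcal{H}_{(k)}$ for every $k$, the closed linear span of $\bigcup_{k\geq 1}\mathcal{H}_{(k)}$ contains the linear span of $\bigcup_{k\geq 1}L_k$, which is dense in $\mathcal{H}$ by the standing hypothesis of this section. Thus the pairwise orthogonal closed subspaces $\mathcal{H}_{(k)}$ together span a dense subspace of $\mathcal{H}$, and the general theory of orthogonal decompositions in a Hilbert space yields that each $h\in\mathcal{H}$ has a unique expansion $h=\sum_{k\geq 1}h_k$ with $h_k\in\mathcal{H}_{(k)}$ and $\sum_k\|h_k\|^2<\infty$, which is precisely the claimed decomposition $\mathcal{H}=\bigoplus_{k=1}^{\infty}\mathcal{H}_{(k)}$.

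I do not expect a real obstacle here; the argument is routine once Proposition \ref{a_a+} and Corollary \ref{A_k} are in hand. The only points deserving a line of care are the identification $\mathcal{H}_{(k)}=\overline{L}_k$ (i.e.\ that closure commutes with applying the bounded operator $(\ag^\dag)^{k-1}|_{\mathcal{H}_1}$) and the reading of the hypothesis ``the union of the $L_k$ is dense'' as ``the \emph{linear span} of that union is dense'', which is the way it is used throughout the section.
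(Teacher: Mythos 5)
Your argument is correct and is exactly the route the paper intends: the proposition is stated there as an immediate consequence of the pairwise orthogonality of the $\overline{L}_k=\mathcal{H}_{(k)}$ (Proposition \ref{a_a+}) together with the density of the linear span of $\bigcup_k L_k$, and you have simply written out the standard Hilbert-space argument that the paper leaves implicit. The two points you flag for care (that the isometry $\mathbf{A}_{(k)}$ carries $\mathrm{closure}(L_1)$ onto a closed subspace equal to $\overline{L}_k$, and the reading of the density hypothesis) are handled correctly.
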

\noindent
In the classical case of one-dimensional space $L_1$, generated by a vacuum vector $\Phi_0$, the statement of proposition is nothing but the well known fact that the infinite system of elements $\frac{1}{\sqrt{(k-1)!}}\,(\ag^\dag)^{\,k-1}\Phi_0$, $k \in \mathbb{N}$ forms an orthonormal basis on $\mathcal{H}$.

The isomorphisms of Corollary \ref{A_k} imply the following isomorphisms
\begin{eqnarray} \label{isoH_k} \non
 \mathcal{H} &\cong& \ell_2(\mathbb{N}, \mathcal{H}_1) = \ell_2(\mathbb{N},\mathbb{R}) \otimes \mathcal{H}_1, \\
 \mathcal{H}_k &\cong& \underbrace{\mathcal{H}_1 \oplus \mathcal{H}_1\ldots  \oplus
  \mathcal{H}_1}_{k \  \mathrm{times}} = \mathbb{R}^k \otimes \mathcal{H}_1.
\end{eqnarray}
An equivalent to Proposition \ref{prop:+} statement can be formulated in terms of the $k$-poly-$\ag$-spaces $\mathcal{H}_{k}$ as follows.
\begin{corollary} \label{co:U_H}
The set of $k$-poly-$\ag$-subspaces $\mathcal{H}_{k}$, $k\in \N$, of the space $\mathcal{H}$ forms an infinite flag in~$\mathcal{H}$
\begin{equation} \label{flagH}
  \mathcal{H}_{1} \ \subset \  \mathcal{H}_{2} \ \subset \ ... \ \subset \  \mathcal{H}_{k} \subset \ ...  \subset \ \mathcal{H},
\end{equation}
and
\begin{equation*}
   \mathcal{H} = \overline{\bigcup_{k=1}^{\infty} \mathcal{H}_{k}}.
\end{equation*}
\end{corollary}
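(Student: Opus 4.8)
The plan is to read off both assertions directly from the orthogonal decomposition \eqref{decomH_k} together with Proposition \ref{prop:+}, so essentially nothing new needs to be proved. First I would establish the chain of inclusions \eqref{flagH}. By \eqref{decomH_k} one has $\mathcal{H}_k = \bigoplus_{\ell=1}^{k}\mathcal{H}_{(\ell)}$, hence $\mathcal{H}_k$ is a (closed) subspace of $\bigoplus_{\ell=1}^{k+1}\mathcal{H}_{(\ell)} = \mathcal{H}_{k+1}$; equivalently, one may argue at the level of the dense subspaces, since $L_1 + \ldots + L_k \subset L_1 + \ldots + L_k + L_{k+1}$ and the passage to the closure is monotone. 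Iterating over $k$ gives the flag \eqref{flagH}. If $L_1 \neq \{0\}$ these inclusions are moreover proper: by Corollary \ref{A_k} the subspace $\mathcal{H}_{(k+1)}$ is isometrically isomorphic to $\mathcal{H}_1 = \mathrm{closure}(L_1) \neq \{0\}$, while $\mathcal{H}_{(k+1)} \perp \mathcal{H}_k$, so $\mathcal{H}_k \subsetneq \mathcal{H}_{k+1}$.

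For the density assertion, I would first note that since the family $\{\mathcal{H}_k\}_{k\in\N}$ is increasing, the union $\bigcup_{k=1}^{\infty}\mathcal{H}_k$ is a (generally non-closed) linear subspace of $\mathcal{H}$. It contains every $L_n$, because $L_n \subset \mathcal{H}_n$ by definition of $\mathcal{H}_n$. The standing hypothesis is that the linear span of $\bigcup_{n} L_n$ is dense in $\mathcal{H}$, whence $\overline{\bigcup_{k}\mathcal{H}_k} \supseteq \overline{\mathrm{span}\bigcup_{n} L_n} = \mathcal{H}$, and the reverse inclusion is trivial. Alternatively, one can invoke Proposition \ref{prop:+} directly: since $\mathcal{H}_{(k)} \subset \mathcal{H}_k$ for each $k$ and $\mathcal{H} = \bigoplus_{k=1}^{\infty}\mathcal{H}_{(k)}$, the closed linear span of $\bigcup_k \mathcal{H}_k$ already exhausts $\mathcal{H}$.

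There is essentially no obstacle here: the statement is a repackaging of Proposition \ref{prop:+} and the definition $\mathcal{H}_k = \mathrm{closure}(L_1 + \ldots + L_k)$. The only points deserving a word of care are that a nested union of closed subspaces need not itself be closed — which is why the closure in $\mathcal{H} = \overline{\bigcup_k \mathcal{H}_k}$ cannot be dropped — and that this closure equals all of $\mathcal{H}$ only because of the density assumption on $\bigcup_n L_n$ (equivalently, because of Proposition \ref{prop:+}); without it one would merely obtain the closed subspace generated by the $L_n$.
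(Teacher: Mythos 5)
Your proof is correct and follows the same route the paper intends: the paper presents Corollary \ref{co:U_H} as an immediate reformulation of Proposition \ref{prop:+} via the decomposition \eqref{decomH_k}, which is exactly how you argue both the nesting and the density. Your added remarks on properness of the inclusions and on the union of closed subspaces not being closed are sound but not needed for the statement.
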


The generators \eqref{sl2ag} of the Lie algebra $\mathfrak{sl}(2)$ in the case (\ref{commutator}) have the form \cite{t-sl22}
\begin{eqnarray}
\label{sl2agH}
   J^+_k &\ =\ &  (\ag^\dag)^2 \ag - (k-1)\, \ag^\dag \ , \non \\
   J^0_k &\ =\ &  \ag^\dag \ag - \textstyle{\frac{k-1}{2}}I \ , \\
   J^-_k &\ =\ & \ag \non ,
\end{eqnarray}
 and they act on elements $h_{(\ell)} \in \mathcal{H}_{(\ell)}$ as follows
\begin{equation*}
 J^+_k\,h_{(\ell)} =
 \begin{cases}
  \ag^\dag (\ell-k)h_{(\ell)} \in \mathcal{H}_{(\ell+1)},  & \ell \neq k \\
  0,  & \ell = k
 \end{cases}, \ \
 J^0_k\,h_{(\ell)} = \textstyle{\frac{2\ell-k+1}{2}}h_{(\ell)}, \ \
J^-_k\,h_{(\ell)} =
 \begin{cases}
 \ag\, h_{(\ell)} \in \mathcal{H}_{(\ell-1)}, & \ell > 1 \\
 0, & \ell = 1
 \end{cases},
\end{equation*}
which implies that for positive integer $k$ the $k$-poly-$\ag$-space $\mathcal{H}_{k}$
is a common invariant subspace for the action of $J^+_k$, $J^0_k$, and  $J^-_k$, where they act boundedly.

We describe now the isomorphic isomorphism \eqref{isoH_k} in more detail.
\begin{observation} \label{matrix}
 {\rm
 By \eqref{decomH_k} each element $h_k \in \mathcal{H}_{k}$ can be represented as a sum of its mutually orthogonal components $h_k = h_{(1)} + h_{(2)} + \ldots + h_{(k)}$ from different true-poly-$\ag$-spaces $\mathcal{H}_{\ell}$, and which we will thus identify with a column vector $h_k = (h_{(1)},h_{(2)},\ldots,h_{(k)})^t$. Each element $g \in \mathbb{R}^k \otimes \mathcal{H}_1$ we also represent as a column vector $g = (g_1,g_2,\ldots,g_k)^t$. Then the isomorphism $\mathbf{A}_k :\, \mathbb{R}^k \otimes \mathcal{H}_1 \rightarrow \mathcal{H}_{k}$ is given by $h_k = \mathbf{A}_k g$,  in matrix form
 \begin{equation*}
\begin{bmatrix}
 h_{(1)} \\
 h_{(2)} \\
 \vdots \\
 h_{(k)} \\
\end{bmatrix}
=
\begin{bmatrix}
\mathbf{A}_{(1)} & 0 & \cdots & 0 \\
0 & \mathbf{A}_{(2)} & \cdots & 0 \\
\vdots  & \vdots  & \ddots & \vdots  \\
0 & 0 & \cdots & \mathbf{A}_{(k)}
\end{bmatrix}
\,
\begin{bmatrix}
 g_1 \\
 g_2 \\
 \vdots \\
 g_k \\
\end{bmatrix} ,
\end{equation*}
 the inverse isomorphism  $\mathbf{A}_k^{-1} :\, \mathbb{R}^k \otimes \mathcal{H}_1 \rightarrow  \mathbf{A}_k$ is given by
 \begin{equation*}
  \begin{bmatrix}
 g_1 \\
 g_2 \\
 \vdots \\
 g_k \\
\end{bmatrix}
=
\begin{bmatrix}
\mathbf{A}_{(1)}^{-1} & 0 & \cdots & 0 \\
0 & \mathbf{A}_{(2)}^{-1} & \cdots & 0 \\
\vdots  & \vdots  & \ddots & \vdots  \\
0 & 0 & \cdots & \mathbf{A}_{(k)}^{-1}
\end{bmatrix}
\,
\begin{bmatrix}
 h_{(1)} \\
 h_{(2)} \\
 \vdots \\
 h_{(k)} \\
\end{bmatrix} ,
\end{equation*}
where the operators $\mathbf{A}_{(\ell)}$ and $\mathbf{A}_{(\ell)}^{-1}$ are given by
\eqref{1->k} and \eqref{k->1} respectively. }
\end{observation}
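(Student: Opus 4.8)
The plan is to read the Observation off from the orthogonal decomposition \eqref{decomH_k} together with Corollary \ref{A_k}; in effect the Observation is just the coordinate form of the isomorphism \eqref{isoH_k}, so nothing beyond those two results is needed. First I would note that by Proposition \ref{a_a+} the subspaces $\mathcal{H}_{(1)},\dots,\mathcal{H}_{(k)}$ are pairwise orthogonal and, by \eqref{decomH_k}, together span $\mathcal{H}_{k}$; hence every $h_k\in\mathcal{H}_{k}$ has a unique expansion $h_k=h_{(1)}+\dots+h_{(k)}$ with $h_{(\ell)}\in\mathcal{H}_{(\ell)}$ and $\|h_k\|^2=\sum_{\ell=1}^k\|h_{(\ell)}\|^2$. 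This is exactly what makes the identification $h_k\leftrightarrow(h_{(1)},\dots,h_{(k)})^t$ legitimate, and on the source side it identifies $\mathbb{R}^k\otimes\mathcal{H}_1$ with the orthogonal direct sum $\bigoplus_{\ell=1}^k\mathcal{H}_1$ written as column vectors $(g_1,\dots,g_k)^t$.

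Next I would invoke Corollary \ref{A_k}: for each $\ell$ the operator $\mathbf{A}_{(\ell)}=\frac{1}{\sqrt{(\ell-1)!}}(\ag^\dag)^{\ell-1}|_{\mathcal{H}_1}$ of \eqref{1->k} is an isometric isomorphism $\mathcal{H}_1\to\mathcal{H}_{(\ell)}$ with inverse $\mathbf{A}_{(\ell)}^{-1}=\frac{1}{\sqrt{(\ell-1)!}}\,\ag^{\ell-1}|_{\mathcal{H}_{(\ell)}}$ of \eqref{k->1}. I would then \emph{define} the isomorphism in question by $\mathbf{A}_k:=\bigoplus_{\ell=1}^k\mathbf{A}_{(\ell)}$, that is $\mathbf{A}_k(g_1,\dots,g_k)^t:=\sum_{\ell=1}^k\mathbf{A}_{(\ell)}g_\ell$. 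Since $\mathbf{A}_{(\ell)}$ maps the $\ell$-th copy of $\mathcal{H}_1$ into $\mathcal{H}_{(\ell)}$ only, the component form of this map is $h_{(\ell)}=\mathbf{A}_{(\ell)}g_\ell$, which is precisely the block-diagonal matrix displayed in the statement. Its range is $\bigoplus_\ell\mathcal{H}_{(\ell)}=\mathcal{H}_{k}$, and, using the orthogonality of both decompositions and the isometry of each block, $\|\mathbf{A}_k g\|^2=\sum_\ell\|\mathbf{A}_{(\ell)}g_\ell\|^2=\sum_\ell\|g_\ell\|^2=\|g\|^2$, so $\mathbf{A}_k$ is an isometric isomorphism; by construction it is the restriction of \eqref{isoH_k} to the first $k$ summands.

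Finally, for the inverse I would simply multiply the two block-diagonal matrices: the identities $\mathbf{A}_{(\ell)}^{-1}\mathbf{A}_{(\ell)}=I_{\mathcal{H}_1}$ and $\mathbf{A}_{(\ell)}\mathbf{A}_{(\ell)}^{-1}=I_{\mathcal{H}_{(\ell)}}$ from Corollary \ref{A_k} give $\mathbf{A}_k^{-1}=\bigoplus_{\ell=1}^k\mathbf{A}_{(\ell)}^{-1}$, which is the second displayed matrix. There is no genuine obstacle in this argument; the only point that requires care is verifying that the off-diagonal blocks really vanish --- equivalently, that $\{\mathcal{H}_{(\ell)}\}_{\ell=1}^k$ is an \emph{orthogonal} family, so that the column-vector bookkeeping and the Pythagorean norm identity are valid --- and this is exactly the orthogonality supplied by Proposition \ref{a_a+}. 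Everything else is routine manipulation of finite orthogonal direct sums.
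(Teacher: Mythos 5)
Your argument is correct and is exactly the route the paper intends: the Observation is stated without proof precisely because it is the coordinate (block-diagonal) form of the isomorphism \eqref{isoH_k}, obtained by combining the orthogonal decomposition \eqref{decomH_k} with the componentwise isometries $\mathbf{A}_{(\ell)}$, $\mathbf{A}_{(\ell)}^{-1}$ of Corollary \ref{A_k}, which is what you do. Your explicit check of the Pythagorean norm identity and of the vanishing of the off-diagonal blocks via Proposition \ref{a_a+} only makes the implicit reasoning fully explicit.
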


\begin{observation} \label{max-inv} {\rm
We note that unless the dimension of the vacuum vector $\mathcal{H}_1$ space is one, the action of the operators \eqref{sl2agH} on $\mathcal{H}_k$ is reducible. The general form of all closed invariant subspaces for this action is as follows. Take any closed subspace $\mathcal{\hat H}$ of $\mathcal{H}_1$, then the space $\mathbf{A}_k (\mathbb{R}^k \otimes \mathcal{\hat H}) \subset \mathcal{H}_k$ is invariant for the action of \eqref{sl2agH}. The space $\mathcal{H}_k$ is maximal (under inclusion) among all those invariant subspaces.}
\end{observation}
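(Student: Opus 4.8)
The plan is to pass to the matrix model of Observation \ref{matrix}, where the action of $J^+_k,J^0_k,J^-_k$ on $\mathcal{H}_k\cong\mathbb{R}^k\otimes\mathcal{H}_1$ becomes the tensor product of a fixed irreducible $k$-dimensional representation of $\mathfrak{sl}(2)$ on the $\mathbb{R}^k$ factor with the identity on $\mathcal{H}_1$, and then to deduce the classification of invariant subspaces from the fact that the $\mathfrak{sl}(2)$-generators of an irreducible finite-dimensional representation generate, as an associative algebra, the full matrix algebra. First I would make the model explicit: writing $h_k=h_{(1)}+\dots+h_{(k)}\in\mathcal{H}_k$ and identifying it via $\mathbf{A}_k$ with $\sum_{\ell=1}^k e_\ell\otimes g_\ell\in\mathbb{R}^k\otimes\mathcal{H}_1$, $g_\ell=\mathbf{A}_{(\ell)}^{-1}h_{(\ell)}$, a short computation with $[\ag,(\ag^\dag)^n]=n(\ag^\dag)^{n-1}$, $\ag L_1=0$ and the normalizations in \eqref{1->k}, \eqref{k->1} — essentially already recorded in the displayed action of $J^\pm_k,J^0_k,J^-_k$ on $h_{(\ell)}$ — gives $J^{\pm}_k\mapsto L^{\pm}\otimes I$ and $J^0_k\mapsto L^0\otimes I$ with $L^{\pm},L^0\in\mathrm{Mat}_k(\mathbb{R})$. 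Here $L^0$ is diagonal with the $k$ pairwise distinct real eigenvalues $\ell-1-\tfrac{k-1}{2}$ (the weights), $L^+e_\ell=(\ell-k)\sqrt{\ell}\,e_{\ell+1}$ (with nonzero coefficient for $\ell<k$ and $L^+e_k=0$), and $L^-e_\ell=\sqrt{\ell-1}\,e_{\ell-1}$ (nonzero for $\ell>1$, $L^-e_1=0$); all three act boundedly on $\mathcal{H}_k$.

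The easy inclusion is then immediate: for a closed subspace $\mathcal{\hat H}\subseteq\mathcal{H}_1$ the space $\mathbb{R}^k\otimes\mathcal{\hat H}=\bigoplus_{\ell=1}^k e_\ell\otimes\mathcal{\hat H}$ is closed and manifestly stable under $L^{\pm}\otimes I$ and $L^0\otimes I$, since these act only on the first tensor factor; hence $\mathbf{A}_k(\mathbb{R}^k\otimes\mathcal{\hat H})$ is a closed invariant subspace of $\mathcal{H}_k$, and it equals $\mathcal{H}_k$ when $\mathcal{\hat H}=\mathcal{H}_1$.

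The substantive part is the converse, and I expect its only real obstacle to be the following algebraic fact: the associative subalgebra of $\mathrm{Mat}_k(\mathbb{R})$ generated by $L^+,L^-,L^0$ is all of $\mathrm{Mat}_k(\mathbb{R})$. I would prove this directly — the $k$ distinct eigenvalues of $L^0$ yield all diagonal matrix units $E_{\ell\ell}$ as polynomials in $L^0$ by a Vandermonde argument, then $E_{\ell+1,\ell+1}L^+E_{\ell\ell}$ and its transpose counterpart, multiplied up the super/sub-diagonals, produce every matrix unit $E_{\ell m}$ — or, alternatively, invoke absolute irreducibility of the spin-$\tfrac{k-1}{2}$ representation together with Burnside's theorem / the Jacobson density theorem. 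Granting this, any subspace $W\subseteq\mathbb{R}^k\otimes\mathcal{H}_1$ invariant under the $\mathfrak{sl}(2)$-action is invariant under every $E_{\ell m}\otimes I$. Put $\mathcal{\hat H}:=\{g\in\mathcal{H}_1:\ e_1\otimes g\in W\}$; it is a closed subspace of $\mathcal{H}_1$, being the preimage of the closed set $W$ under the isometric map $g\mapsto e_1\otimes g$. Applying $E_{m1}\otimes I$ to $e_1\otimes g$ for $g\in\mathcal{\hat H}$ gives $e_m\otimes g\in W$ for all $m$, so $\mathbb{R}^k\otimes\mathcal{\hat H}\subseteq W$; conversely, for $w=\sum_\ell e_\ell\otimes g_\ell\in W$ one has $(E_{1\ell}\otimes I)w=e_1\otimes g_\ell\in W$, hence $g_\ell\in\mathcal{\hat H}$ and $w\in\mathbb{R}^k\otimes\mathcal{\hat H}$. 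Thus $W=\mathbb{R}^k\otimes\mathcal{\hat H}$, i.e. the original subspace is $\mathbf{A}_k(\mathbb{R}^k\otimes\mathcal{\hat H})$, which establishes that this is the general form. Maximality of $\mathcal{H}_k$ is then automatic: the correspondence $\mathcal{\hat H}\mapsto\mathbf{A}_k(\mathbb{R}^k\otimes\mathcal{\hat H})$ is inclusion-preserving, $\mathcal{H}_k$ corresponds to the largest possible $\mathcal{\hat H}=\mathcal{H}_1$, and $\mathcal{H}_k$ is itself invariant.

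Two minor points I would keep in view: boundedness of the $J^\bullet_k$ on $\mathcal{H}_k$ (already noted in the excerpt), which makes the passage between ``invariant under the generators'' and ``invariant under finite products and linear combinations of them'' harmless; and preservation of closedness throughout, which is automatic because $\mathbb{R}^k$ is finite-dimensional and all the maps involved ($\mathbf{A}_k$, $g\mapsto e_\ell\otimes g$, $E_{\ell m}\otimes I$) are bounded, the first two isometric. Everything beyond the matrix-algebra generation statement is bookkeeping with the orthogonal decomposition \eqref{decomH_k} and Observation \ref{matrix}.
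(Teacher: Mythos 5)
Your proposal is correct and follows exactly the route the paper has in mind: the paper states this as an unproved Observation, but the surrounding text (Observation \ref{matrix} and the subsequent identification of $J^{\pm}_k,J^0_k$ with $M^{\pm}_k\otimes I$, $M^0_k\otimes I$, together with the remark that $M^+_k,M^0_k,M^-_k$ generate $\mathrm{Mat}_k(\mathbb{R})$ and that the matrix units arise as polynomials $P_{m,n}$) supplies precisely the ingredients you assemble. Your write-up correctly fills in the one step the paper leaves implicit, namely the converse classification of closed invariant subspaces via the matrix units $E_{\ell m}\otimes I$.
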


Given a Hilbert space $H$, we denote by $\mathcal{L}(H)$ the set of all bounded linear operators acting on~$H$. \\
Observation \ref{matrix} implies that there is one-to-one correspondence between elements of $\mathcal{L}(\mathcal{H}_{k})$ and elements of
\begin{equation*}
 \mathcal{L}(\mathbb{R}^k \otimes \mathcal{H}_1) = \mathcal{L}(\mathbb{R}^k) \otimes \mathcal{L}(\mathcal{H}_1) = \mathrm{Mat}_k(\mathbb{R}) \otimes \mathcal{L}(\mathcal{H}_1) = \mathrm{Mat}_k(\mathcal{L}(\mathcal{H}_1)),
\end{equation*}
i.e., each element $T \in \mathcal{L}(\mathcal{H}_{k})$ is unitary equivalent to a certain element $\widetilde{T} \in \mathcal{L}(\mathbb{R}^k \otimes \mathcal{H}_1)$, which are related by $T = \mathbf{A}_k \widetilde{T} \mathbf{A}_k^{-1}$ or $\widetilde{T} = \mathbf{A}_k^{-1} T \mathbf{A}_k$. That is, each bounded linear operator acting on $\mathcal{H}_{k}$ is uniraty equivalent to a $(k\times k)$-matrix operator with entries from $\mathcal{L}(\mathcal{H}_1)$.

It is a matter of a simple calculation to see that the operators $J^+_k$, $J^0_k$, and $J^=_k$, acting on $\mathcal{H}_{k}$, are unitary equivalent to the following operators, acting on $\mathbb{R}^k \otimes \mathcal{H}_1$,
\begin{equation*}
 \widetilde{J^+_k} = M_k^+ \otimes I, \quad \widetilde{J^0_k} = M_k^0 \otimes I, \quad \widetilde{J^-_k} = M_k^- \otimes I,
\end{equation*}
where the $(k \times k)$-matrices $M^+_k$, $M^0_k$, and $M^-_k$ are given by
\begin{eqnarray*}
 M^+_k &=& -
 \begin{bmatrix}
0 & 0 & 0 & \cdots & 0 & 0 \\
k-1 & 0 & 0 & \cdots & 0 & 0 \\
0 & \sqrt{2}(k-2) & 0 & \cdots & 0 & 0 \\
\vdots  & \vdots & \vdots   &  & \ddots  & \vdots \\
0 & 0 & 0 & \cdots & \sqrt{k-1} & 0
\end{bmatrix}, \\
M^0_k &=&
 \begin{bmatrix}
\textstyle{\frac{1-k}{2}} & 0 & \cdots & 0 \\
0 & \textstyle{\frac{3-k}{2}} &  \cdots  & 0 \\
\vdots  & \vdots    & \ddots  &  \vdots \\
0 & 0 &  \cdots & \textstyle{\frac{k-1}{2}}
\end{bmatrix}, \\
M^-_k &=&
 \begin{bmatrix}
0 & 1 & 0 & \cdots &  0 \\
0 & 0 & \sqrt{2} & \cdots & 0  \\
\vdots  & \vdots & & \ddots  & \vdots \\
0 & 0 & 0 & \cdots & \sqrt{k-1} \\
0 & 0 & 0 & \cdots  & 0
\end{bmatrix}\ ,
\end{eqnarray*}
and form the $(k \times k)$-matrix representation of the generators of the algebra $\mathfrak{sl}(2)$.
Note that the matrices $M^+_k$, $M^0_k$, and $M^-_k$ have no non-trivial common invariant subspace in $\mathbb{R}^k$. Thus they generate the algebra $\mathrm{Mat}_k(\mathbb{R})$, equivalently, each  $(k \times k)$-matrix is a polynomial of $M^+_k$, $M^0_k$, and $M^-_k$. Note that by dimension reasoning there can not be more then $k^2$ linearly independent such polynomials. At the same time there are $k^2$ linearly independent  polynomials $P_{m,n}(M^+_k, M^0_k, M^-_k) = E_{m,n}$ realizing matrix-units of $(k \times k)$-matrix. This leads us to the following proposition.

\begin{proposition} \label{inv-F_k}
 Each bounded linear operator acting on $\mathbb{R}^k \otimes \mathcal{H}_1$ can be uniquely represented in the form
 \begin{equation*}
  \widetilde{T} = \sum_{n,m = 1}^k P_{m,n}(\widetilde{J^+_k}, \widetilde{J^0_k}, \widetilde{J^-_k)}\, S_{m,n},
 \end{equation*}
where $S_{m,n} \in \mathcal{L}(\mathcal{H}_1)$, $m,n =1,2,\ldots,k$. \\
Thus each bounded linear operator acting on the $k$-poly-$\ag$-space $\mathcal{H}_k$ can be uniquely represented in the form
\begin{equation*}
T = \mathbf{A}_k \widetilde{T} \mathbf{A}_k^{-1} =\sum_{n,m = 1}^k P_{m,n}(J^+_k, J^0_k, J^-_k)\, \mathbf{A}_{(m)}S_{m,n}\mathbf{A}_{(n)}^{-1},
\end{equation*}
with $S_{m,n} \in \mathcal{L}(\mathcal{H}_1)$, $m,n =1,2,\ldots,k$.
\end{proposition}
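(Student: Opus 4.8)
The plan is to first establish the statement on the model space $\mathbb{R}^k \otimes \mathcal{H}_1$ and then transport it to $\mathcal{H}_k$ via the unitary $\mathbf{A}_k$ of Observation \ref{matrix}. On the model side, recall from Observation \ref{matrix} that $\mathcal{L}(\mathbb{R}^k \otimes \mathcal{H}_1) \cong \mathrm{Mat}_k(\mathcal{L}(\mathcal{H}_1))$, so every $\widetilde{T}$ has a unique expansion $\widetilde{T} = \sum_{m,n=1}^k (E_{m,n} \otimes I)\,(I \otimes S_{m,n})$ with $S_{m,n} \in \mathcal{L}(\mathcal{H}_1)$ the matrix entries of $\widetilde{T}$, where $E_{m,n}$ are the matrix units of $\mathrm{Mat}_k(\mathbb{R})$. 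The key algebraic input, already noted in the text preceding the proposition, is that $M^+_k, M^0_k, M^-_k$ generate $\mathrm{Mat}_k(\mathbb{R})$ as an associative algebra, so there exist noncommutative polynomials $P_{m,n}$ with $P_{m,n}(M^+_k, M^0_k, M^-_k) = E_{m,n}$; applying the homomorphism $X \mapsto X \otimes I$ gives $P_{m,n}(\widetilde{J^+_k}, \widetilde{J^0_k}, \widetilde{J^-_k}) = E_{m,n} \otimes I$. Substituting this into the expansion yields $\widetilde{T} = \sum_{m,n=1}^k P_{m,n}(\widetilde{J^+_k}, \widetilde{J^0_k}, \widetilde{J^-_k})\, S_{m,n}$, where $S_{m,n}$ is identified with $I \otimes S_{m,n} \in \mathcal{L}(\mathbb{R}^k \otimes \mathcal{H}_1)$.

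Uniqueness follows by reversing the argument: given any representation of $\widetilde{T}$ in that form, left-multiply by $P_{p,q}$-type expressions, or more directly observe that $P_{m,n}(\widetilde{J^+_k},\widetilde{J^0_k},\widetilde{J^-_k})\,S_{m,n} = E_{m,n} \otimes S_{m,n}$, and the block-matrix entry in position $(p,q)$ of $\sum E_{m,n} \otimes S_{m,n}$ is exactly $S_{p,q}$; since a bounded operator is determined by its block entries, the $S_{m,n}$ are forced to be the matrix entries of $\widetilde{T}$, hence unique. One should check that the polynomials $P_{m,n}$ can indeed be chosen independent of the $\mathcal{H}_1$-entries — this is automatic since $P_{m,n}$ depends only on the scalar matrices $M^\bullet_k$, and the count $\dim \mathrm{Mat}_k(\mathbb{R}) = k^2$ matches the number of matrix units, so the family $\{P_{m,n}\}$ is exactly determined (up to the ideal of relations among $M^\bullet_k$, which acts trivially).

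For the second half, conjugate by $\mathbf{A}_k$. Since $J^\pm_k, J^0_k$ on $\mathcal{H}_k$ are unitarily equivalent to $\widetilde{J^\pm_k}, \widetilde{J^0_k}$ via $\mathbf{A}_k$ (as recorded just before the proposition), and since a polynomial of conjugates is the conjugate of the polynomial, we get $P_{m,n}(J^+_k,J^0_k,J^-_k) = \mathbf{A}_k\, P_{m,n}(\widetilde{J^+_k},\widetilde{J^0_k},\widetilde{J^-_k})\, \mathbf{A}_k^{-1}$. Then $T = \mathbf{A}_k \widetilde{T} \mathbf{A}_k^{-1} = \sum_{m,n} \mathbf{A}_k P_{m,n}(\widetilde{J}^\bullet_k) \mathbf{A}_k^{-1} \cdot \mathbf{A}_k (I \otimes S_{m,n}) \mathbf{A}_k^{-1}$, and using the block-diagonal form of $\mathbf{A}_k = \mathrm{diag}(\mathbf{A}_{(1)},\ldots,\mathbf{A}_{(k)})$ from Observation \ref{matrix}, the conjugate of $I \otimes S_{m,n}$ restricted to the relevant blocks produces $\mathbf{A}_{(m)} S_{m,n} \mathbf{A}_{(n)}^{-1}$ in the appropriate slot, which is the claimed formula. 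Uniqueness on $\mathcal{H}_k$ is inherited from uniqueness on the model space because conjugation by $\mathbf{A}_k$ is a bijection.

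The main obstacle I anticipate is purely bookkeeping rather than conceptual: matching the operator $\mathbf{A}_k(I \otimes S_{m,n})\mathbf{A}_k^{-1}$ against $\mathbf{A}_{(m)} S_{m,n} \mathbf{A}_{(n)}^{-1}$ requires being careful that $P_{m,n}(\widetilde{J}^\bullet_k) = E_{m,n} \otimes I$ selects the $(m,n)$ block, so that only the $n$-th column block of $I \otimes S_{m,n}$ is seen on the right and only the $m$-th row block survives on the left — i.e. the product $E_{m,n}\otimes S_{m,n}$ has the single nonzero block $\mathbf{A}_{(m)}S_{m,n}\mathbf{A}_{(n)}^{-1}$ after conjugation — together with verifying that $\{P_{m,n}(M^+_k,M^0_k,M^-_k)=E_{m,n}\}$ genuinely holds, which rests on the stated irreducibility of the $\mathfrak{sl}(2)$ representation on $\mathbb{R}^k$ (Burnside's theorem, or equivalently that the $(k\times k)$ matrices $M^\bullet_k$ have no nontrivial common invariant subspace). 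I would state the existence of the $P_{m,n}$ as a consequence of this irreducibility and not attempt to write them explicitly.
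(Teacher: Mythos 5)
Your argument is correct and is essentially the paper's own: the paper gives no separate proof but derives the proposition from the immediately preceding discussion — the identification $\mathcal{L}(\mathbb{R}^k\otimes\mathcal{H}_1)=\mathrm{Mat}_k(\mathcal{L}(\mathcal{H}_1))$, the absence of a common invariant subspace for $M^+_k,M^0_k,M^-_k$ (hence, by Burnside, the existence of polynomials $P_{m,n}$ realizing the matrix units $E_{m,n}$), and conjugation by the block-diagonal $\mathbf{A}_k$ of Observation \ref{matrix}. Your write-up fleshes out exactly these steps, including the uniqueness via block entries, so it matches the intended proof.
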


Restricting the class of operators from $\mathcal{L}(\mathcal{H}_1)$ to just scalar operators we come to
\begin{corollary} \label{F_k-invariant}
 Each operator $P(J^0_k, J^-_k)$ being a polynomial of $J^0_k$ and $J^-_k$ with complex coefficients acts invariantly on each $\mathcal{H}_k$, is bounded there,
 and preserves the flag \eqref{flagH}.

  Additionally, each polynomial $P(\ag^\dag \ag)= P(J^0_k+\frac{k-1}{2}I)$ with complex coefficients preserves all true-poly-$\ag$-spaces $\mathcal{H}_{(n)}$ and acts boundedly on each of them.
\end{corollary}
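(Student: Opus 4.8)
The plan is to extract everything from the explicit action of the generators \eqref{sl2agH} on the orthogonal summands $\mathcal{H}_{(\ell)}$ recorded just before Proposition~\ref{inv-F_k}, so the first step is to pin down two facts. First, that $J^0_k=\ag^\dag\ag-\frac{k-1}{2}I$ acts on each $\mathcal{H}_{(\ell)}$ as multiplication by a real scalar; this I would get from $\ag^\dag\ag\,h_{(\ell)}=(\ell-1)h_{(\ell)}$ for $h_{(\ell)}\in\mathcal{H}_{(\ell)}$, which follows from $[\ag,(\ag^\dag)^{\ell-1}]=(\ell-1)(\ag^\dag)^{\ell-2}$ (a consequence of \eqref{commutator}) together with $\ag h_1=0$ for $h_1\in\mathcal{H}_1$. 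In particular $J^0_k$ leaves every $\mathcal{H}_{(\ell)}$ invariant. Second, that $J^-_k=\ag$ maps $\mathcal{H}_{(\ell)}$ into $\mathcal{H}_{(\ell-1)}$ for $\ell>1$ and kills $\mathcal{H}_{(1)}=\mathcal{H}_1$. I would also record that both operators are bounded on each $\mathcal{H}_k$: Proposition~\ref{a_a+} gives $\|\ag|_{\mathcal{H}_{(\ell)}}\|=\sqrt{\ell-1}$, and since distinct $\mathcal{H}_{(\ell)}$ are mutually orthogonal this yields $\|J^-_k|_{\mathcal{H}_k}\|\le\sqrt{k-1}$ and $\|J^0_k|_{\mathcal{H}_k}\|\le\frac{k-1}{2}$ (equivalently, invoke the unitary equivalence with the finite matrices $M^-_k$, $M^0_k$ noted before Proposition~\ref{inv-F_k}).

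Granting these, the first assertion is immediate. Using $\mathcal{H}_\ell=\bigoplus_{j=1}^{\ell}\mathcal{H}_{(j)}$ from \eqref{decomH_k}, one sees that $J^0_k$ sends $\mathcal{H}_\ell$ into $\mathcal{H}_\ell$ and $J^-_k$ sends $\mathcal{H}_\ell$ into $\mathcal{H}_{\ell-1}\subseteq\mathcal{H}_\ell$; hence every monomial in $J^0_k$ and $J^-_k$, and so every polynomial $P(J^0_k,J^-_k)$, carries $\mathcal{H}_\ell$ into itself. Letting $\ell$ run over $1,\dots,k$ shows $P(J^0_k,J^-_k)$ preserves the flag \eqref{flagH}; and since $J^0_k$ and $J^-_k=\ag$ make sense on the whole algebraic span $\bigcup_{m}\mathcal{H}_m$, the same argument shows it preserves every $\mathcal{H}_m$. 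Boundedness on $\mathcal{H}_k$ is then automatic, being a finite algebraic combination of operators bounded on $\mathcal{H}_k$. I would add the remark that $J^+_k$ is deliberately excluded: by the same action formulas it raises the index, $\mathcal{H}_{(\ell)}\to\mathcal{H}_{(\ell+1)}$, so it would move $\mathcal{H}_\ell$ into $\mathcal{H}_{\ell+1}$ and destroy flag-invariance.

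For the second assertion I would write $\ag^\dag\ag=J^0_k+\frac{k-1}{2}I$ and reuse the computation above: $\ag^\dag\ag$ acts on $\mathcal{H}_{(\ell)}$ as multiplication by the scalar $\ell-1$ (alternatively, transport $\mathcal{H}_{(\ell)}$ to $\mathcal{H}_1$ via $\mathbf{A}_{(\ell)}^{-1}$ of Corollary~\ref{A_k} and do the commutator computation there). Then for any complex polynomial $P$, the operator $P(\ag^\dag\ag)$ acts on $\mathcal{H}_{(\ell)}$ as the scalar operator $P(\ell-1)\,I$, which manifestly preserves $\mathcal{H}_{(\ell)}$ and is bounded there with norm $|P(\ell-1)|$.

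I do not anticipate a genuine obstacle: all the content lives in the action formulas preceding Proposition~\ref{inv-F_k}, and the corollary is a bookkeeping consequence of the slogan ``$J^0_k$ preserves the $\ell$-grading while $J^-_k$ lowers it''. The only point that needs a sentence of care is that $J^0_k$ and $J^-_k$ are a priori unbounded on $\mathcal{H}$, so the words ``bounded'' and ``polynomial'' must be read on the reducing subspace $\mathcal{H}_k$ (respectively on $\mathcal{H}_{(\ell)}$); once one restricts there, every manipulation is an honest bounded-operator computation.
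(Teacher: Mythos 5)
Your proposal is correct and follows essentially the same route as the paper: the corollary is read off from the explicit action formulas of $J^0_k$ and $J^-_k$ on the graded pieces $\mathcal{H}_{(\ell)}$ (equivalently, from the diagonal/strictly-triangular matrices $M^0_k$, $M^-_k$ under the isomorphism of Observation \ref{matrix}), with $J^0_k$ preserving the grading and $J^-_k$ lowering it. Your scalar $(\ell-1)-\frac{k-1}{2}$ agrees with the diagonal of $M^0_k$ (the in-text formula $\frac{2\ell-k+1}{2}$ in the paper is a typo), and your norm bounds and the extension to every $\mathcal{H}_m$ in the flag are exactly the bookkeeping the paper leaves implicit.
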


Note that in the standard case of the one-dimensional $\mathcal{H}_1$ (single vacuum vector) the above statements are fundamental in the study of the so-called \emph{exactly-solvable} and  \emph{quasi-exactly-solvable} problems, see \cite{Turbiner:1988,Turbiner:1994,t2,t-sl22}, for review see also \cite{t-2016}.

\section{Poly-analytic functions in one variable revisited}
\label{oneFock}

 We start with $\mathcal{H}$ being the space $L_{2}(\C,d\mu)$ of square-integrable
on $\C$ with the Gaussian measure
\begin{equation*}
   d\mu(z)=\pi^{-1}\,e^{-z\cdot {\bar z}}dv(z)\ ,
\end{equation*}
where $dv(z)=dxdy$ is the Euclidean volume measure on $\C=\R^{2}$, and the following \cite[Section 2]{Vasilev00} lowering and raising operators
\begin{equation}
\label{a^dag-a}
 \ag\ =\ \frac{\pa}{\pa \overline{z}}, \qquad \ag^\dag\ =\ -\frac{\pa}{\pa z} + \overline{z}.
\end{equation}
Recall that the classical Fock \cite{Berezin72,Fock32} (or Segal-Bargmann \cite{Bargmann61,Segal60}) space $F^{2}(\C)$ is the closed subspace of $L_{2}(\C,d\mu)$, which consists of all analytic in $\C$ functions.

Alternatively, it can be defined as the (closed) subspace of all smooth functions satisfying the Cauchy--Riemann equation
\begin{equation}
\label{A}
   \ag\,\varphi = \frac{\pa \varphi}{\pa \overline{z}}\ =\ 0\ ,
\end{equation}
implying that each normalized analytic function can serve as a vacuum vector.

We recall now some results from \cite{Vasilev00}, where all proofs and details can be found.  Alternatively some of them yields from the considerations in the previous section.\\
Besides the Fock space $F^{2}(\C)$, we additionally
introduce the poly-Fock spaces, i.e., for each $k \in \N$, the $k$-Fock (or $k$-poly-Fock)
space $F^2_k(\C)$ ($k$-poly-$\ag$-space, as defined by \eqref{a^k}) is the closed set of all smooth functions from
$L_2(\C,d\mu)$ satisfying the equation
\begin{equation} \label{kA}
\ag^k\, \varphi = \left(\frac{\pa}{\pa\overline{z}}\right)^k\varphi\ =\ 0\ .
\end{equation}
It is convenient to introduce the spaces
\begin{eqnarray*}
  F^2_{(k)}(\C) &=& F^2_{k}(\C) \ominus F^2_{k-1}(\C), \ \ \ \ {\rm
  for}  \ \ k>1, \\
  F^2_{(1)}(\C) &=& F^2_{1}(\C) = F^2(\C), \ \ \ \ {\rm
  for}  \ \ k=1.
\end{eqnarray*}
We call the space $F^2_{(k)}(\C)$ the {\it true-$k$-Fock space}.
It is evident that
\begin{equation} \label{k+p}
  F^2_{k}(\C)\ = \bigoplus_{p=1}^{k} F^2_{(p)}(\C) .
\end{equation}
Then we have

\begin{proposition}\cite[Corollary 2.4]{Vasilev00} \label{co:+}
The space $L_2(\C,d\mu)$ admits the following decomposition
\begin{equation*}
   L_2(\C,d\mu) = \bigoplus_{k=1}^{\infty} F^2_{(k)}(\C) .
\end{equation*}
\end{proposition}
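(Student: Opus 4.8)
The plan is to read Proposition~\ref{co:+} as the special case of the general decomposition in Proposition~\ref{prop:+} corresponding to $\mathcal{H}=L_2(\C,d\mu)$ and to the lowering/raising operators $\ag=\pa_{\bar z}$, $\ag^\dag=-\pa_z+\bar z$ from \eqref{a^dag-a}. To invoke Proposition~\ref{prop:+} one must verify the four standing hypotheses of Section~\ref{se:overture}: (i)~$[\ag,\ag^\dag]=I$; (ii)~$\ag$ and $\ag^\dag$ are mutually adjoint on a common dense domain; (iii)~$L_1:=\ker\ag$ is the space of vacuum vectors, here $F^2(\C)$; (iv)~the union of the spaces $L_n:=(\ag^\dag)^{n-1}L_1$ is dense in $\mathcal{H}$. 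Granted these, Proposition~\ref{prop:+} yields $L_2(\C,d\mu)=\bigoplus_{k=1}^\infty\mathcal{H}_{(k)}$, and it remains only to observe that the definitions match: $\mathcal{H}_k=\ker\ag^k=\mathrm{closure}(L_1+\dots+L_k)$ is exactly the space $F^2_k(\C)$ of \eqref{kA}, hence $\mathcal{H}_{(k)}=\mathcal{H}_k\ominus\mathcal{H}_{k-1}=F^2_{(k)}(\C)$ by \eqref{k+p}.

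Hypotheses (i)--(iii) are routine. The commutator is $[\pa_{\bar z},\,-\pa_z+\bar z]=[\pa_{\bar z},\bar z]=I$. For the adjointness, one integrates by parts against the weight $\pi^{-1}e^{-z\bar z}$, using $\pa_{\bar z}e^{-z\bar z}=-z\,e^{-z\bar z}$ and $\overline{\pa_z g}=\pa_{\bar z}\bar g$: for $f,g$ in the span of monomials in $z,\bar z$ the boundary term vanishes and $\frac1\pi\int(\pa_{\bar z}f)\,\bar g\,e^{-z\bar z}\,dv=\frac1\pi\int f\,\overline{(-\pa_z+\bar z)g}\,e^{-z\bar z}\,dv$, i.e.\ $\langle\ag f,g\rangle=\langle f,\ag^\dag g\rangle$. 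Finally $\ker\ag$ consists of the smooth $L_2$-functions annihilated by $\pa_{\bar z}$, i.e.\ the entire functions lying in $L_2(\C,d\mu)$, which is precisely $F^2(\C)$; thus every normalized analytic function is a vacuum vector, in agreement with the remark following \eqref{A}.

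The one substantive step is the density of $\bigcup_n L_n$. I would deduce it from the classical fact that polynomials in $z$ and $\bar z$ are dense in $L_2(\C,d\mu)$ with the Gaussian measure, by showing that $\mathrm{span}\bigcup_n L_n$ \emph{equals} the space of all such polynomials. One inclusion is clear: $\ag^\dag$ maps polynomials to polynomials and the analytic monomials $z^i$ lie in $L_1$, so every element of $\mathrm{span}\bigcup_n L_n$ is a polynomial. For the reverse inclusion, note that $\ag^\dag=\bar z-\pa_z$ is multiplication by $\bar z$ minus $\pa_z$, and these two operations commute since $\pa_z\bar z=0$; hence
\[
 (\ag^\dag)^{j}z^{i}=(\bar z-\pa_z)^{j}z^{i}=\bar z^{\,j}z^{\,i}+\sum_{l=1}^{\min(i,j)}\binom{j}{l}(-1)^l\,\frac{i!}{(i-l)!}\,\bar z^{\,j-l}z^{\,i-l},
\]
a linear combination of $\bar z^{\,j}z^{\,i}$ and monomials of strictly smaller $\bar z$-degree. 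An induction on $j$, with base case $z^{i}\in L_1$, then gives $\bar z^{\,j}z^{\,i}\in\mathrm{span}\bigcup_n L_n$ for all $i,j\ge 0$, which establishes (iv).

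The main obstacle is concentrated in that last paragraph, and it is mild: the triangular-in-$\bar z$-degree identity for $(\ag^\dag)^jz^i$ is a finite, elementary computation, so the only genuinely external input is the completeness of polynomials in $L_2(\C,d\mu)$ (equivalently, of the two-dimensional Hermite functions). A minor technical point worth stating explicitly is the choice of common core in hypothesis (ii): it is convenient to take the polynomial span, which contains all the $L_n$ and on which the operators $J^\pm_k$, $J^0_k$ of Section~\ref{oneFock} act, so that the formal adjointness computation is legitimate and no closure subtleties intervene.
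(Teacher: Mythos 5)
Your proof is correct, but note that the paper itself gives no proof of this proposition: it is quoted verbatim from \cite[Corollary 2.4]{Vasilev00}, with only the passing remark that ``alternatively some of them yields from the considerations in the previous section.'' What you have done is carry out that alternative in full, specializing Proposition~\ref{prop:+} to $\ag=\pa_{\bar z}$, $\ag^\dag=\bar z-\pa_z$ and supplying the one hypothesis that is not formal, namely the density of $\bigcup_n L_n$. This is a genuinely different route from Vasilevski's original argument, which constructs a unitary operator out of the Fourier transform to reduce the problem to the completeness of the Hermite functions on the real line; your route replaces that with the abstract ladder-operator formalism plus the completeness of polynomials in the Gaussian $L_2$ space --- ultimately the same external input, but packaged so that the only analysis needed is the triangular identity for $(\ag^\dag)^j z^i$, which you verify correctly. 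The payoff of your version is that it is self-contained within the paper and makes visible exactly where the flag structure of Section~\ref{se:overture} does the work; the cost is that you must separately justify density of polynomials in $L_2(\C,d\mu)$, which Vasilevski's Fourier-transform argument gets as part of the package.

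One small inaccuracy to fix: your claim that $\mathrm{span}\bigcup_n L_n$ \emph{equals} the space of polynomials in $z,\bar z$ is false, because $L_1=\ker\ag=F^2(\C)$ contains non-polynomial entire functions (e.g.\ $e^{\alpha z}$), so the ``clear'' inclusion in your second-to-last paragraph does not hold. Fortunately that direction is never used: density only requires that every monomial $\bar z^{\,j}z^{\,i}$ lie in $\mathrm{span}\bigcup_n L_n$, which is exactly what your induction on $j$ establishes. Simply delete the equality claim and state the argument as a one-way inclusion.
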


An equivalent to Proposition \ref{co:+} statement can be formulated in terms of the $k$-Fock spaces as follows.
\begin{corollary} \label{co:U}
The set of $k$-Fock subspaces $F^2_k(\C)$, $k\in \N$, of the space $L_2(\C,d\mu)$ forms an infinite flag
in $L_2(\C,d\mu)$
\begin{equation} \label{flag1}
 F^2_1(\C) \ \subset \ F^2_2(\C)\ \subset \ ... \ \subset \ F^2_k(\C) \subset \ ...  \subset \ L_2(\C,d\mu),
\end{equation}
and
\begin{equation*}
   L_2(\C,d\mu) = \overline{\bigcup_{k=1}^{\infty} F^2_{k}(\C}).
\end{equation*}
\end{corollary}

Among other properties of the true-$k$-Fock spaces we mention

\begin{proposition} \cite[Theorem 2.9]{Vasilev00}
 For each $k = 2,3, ...$, the operator
\begin{equation} \label{k-1->k}
  \textstyle{\frac{1}{\sqrt{k-1}}}\,\ag^\dag|_{F_{(k-1)}^2(\mathbb{C})} : F_{(k-1)}^2(\mathbb{C}) \longrightarrow F_{(k)}^2(\mathbb{C})
\end{equation}
is an isometric isomorphism, and the operator
\begin{equation} \label{k->k-1}
   \textstyle{\frac{1}{\sqrt{k-1}}}\,\ag|_{F_{(k)}^2(\mathbb{C})} : F_{(k)}^2(\mathbb{C}) \longrightarrow F_{(k-1)}^2(\mathbb{C})\ ,
\end{equation}
is its inverse.

And for each $k \in \N$, the operator
\begin{equation*} 
  \mathbf{A}_{(k)} := \textstyle{\frac{1}{\sqrt{(k-1)!}}}\, (\ag^\dag)^{\,k-1}|_{F^2(\C)}  : F^2(\C) \ \longrightarrow \ F_{(k)}^2(\C)\ ,
\end{equation*}
gives an isometric isomorphism between the Fock space $F^2(\C)$ and the true-poly-Fock space $F_{(k)}^2(\C)$,
and the operator
\begin{equation*}
  \mathbf{A}_{(k)}^{-1} := \textstyle{\frac{1}{\sqrt{(k-1)!}}}\,{\ag}^{k-1}|_{F_{(k)}^2(\C)} : F_{(k)}^2(\C)\ \longrightarrow  \ F^2(\C)\ ,
\end{equation*}
is an inverse isomorphism.
\end{proposition}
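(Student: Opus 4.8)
The plan is to derive this proposition as a direct specialization of the abstract Fock-space machinery developed in Section~\ref{se:overture}, since the pair $(\ag, \ag^\dag)$ from \eqref{a^dag-a} satisfies exactly the hypotheses required there. First I would verify the standing assumptions: that $\ag = \pa/\pa\overline z$ and $\ag^\dag = -\pa/\pa z + \overline z$ are mutually adjoint on a common dense domain in $L_2(\C,d\mu)$ (an integration-by-parts computation against the Gaussian weight, already recorded in \cite{Vasilev00}), that $[\ag,\ag^\dag] = I$ (a one-line calculation), that $L_1 = \ker\ag = F^2(\C)$ is the Fock space, and that $\bigcup_n (\ag^\dag)^{n-1} L_1$ is dense in $L_2(\C,d\mu)$ — the latter being precisely the content of Proposition~\ref{co:+}. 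With these in place, $F^2_k(\C)$ is by definition the $k$-poly-$\ag$-space $\mathcal H_k$, and $F^2_{(k)}(\C)$ coincides with the true-$k$-poly-$\ag$-space $\mathcal H_{(k)}$ because of the orthogonal decomposition \eqref{k+p}, which matches \eqref{decomH_k}.

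Once the dictionary is set up, the two claimed isometric isomorphisms are immediate. The statement about $\tfrac{1}{\sqrt{k-1}}\,\ag^\dag \colon F^2_{(k-1)}(\C) \to F^2_{(k)}(\C)$ and its inverse $\tfrac{1}{\sqrt{k-1}}\,\ag$ is exactly Proposition~\ref{a_a+} (its second half), transported through the identifications $\overline L_{k} = F^2_{(k)}(\C)$; and the statement about $\mathbf A_{(k)} = \tfrac{1}{\sqrt{(k-1)!}}\,(\ag^\dag)^{k-1}\colon F^2(\C)\to F^2_{(k)}(\C)$ with inverse $\tfrac{1}{\sqrt{(k-1)!}}\,\ag^{k-1}$ is exactly Corollary~\ref{A_k}. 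So the body of the proof is essentially the observation ``this is the situation of Section~\ref{se:overture} with $\mathcal H = L_2(\C,d\mu)$, $\ag = \pa/\pa\overline z$, $\ag^\dag = -\pa/\pa z + \overline z$, $\mathcal H_1 = F^2(\C)$; apply Proposition~\ref{a_a+} and Corollary~\ref{A_k}.''

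The only genuine work — and the step I expect to be the main obstacle — is the verification of self-adjointness/adjointness of $\ag$ and $\ag^\dag$ together with the density of $\bigcup_n L_n$, because these are the hypotheses on which the whole abstract scheme rests and they are the place where unboundedness and domain issues could in principle bite. In practice this difficulty is already resolved in the literature: the adjointness follows from integration by parts against $d\mu$ (the boundary terms vanish by the Gaussian decay on the relevant dense domain of, say, polynomials in $z,\overline z$ times $1$, or Schwartz-type functions), and the density of $\bigcup_k F^2_k(\C)$ is Proposition~\ref{co:+}, quoted from \cite[Corollary~2.4]{Vasilev00}. Hence I would simply cite those facts. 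A small secondary point worth a sentence is that the normalizing constants $\tfrac{1}{\sqrt{k-1}}$ and $\tfrac{1}{\sqrt{(k-1)!}}$ here agree with those in the abstract statements, which they do because $[\ag,(\ag^\dag)^{n}] = n(\ag^\dag)^{n-1}$ is model-independent. I would then close by noting that, alternatively, one can give a direct self-contained proof by literally repeating the integration-by-parts computations in the proof of Proposition~\ref{a_a+} with $\ag,\ag^\dag$ replaced by \eqref{a^dag-a}, which is what \cite[Theorem~2.9]{Vasilev00} does.
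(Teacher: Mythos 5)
Your proposal is correct and takes essentially the route the paper itself indicates: the paper gives no proof of this proposition, citing \cite[Theorem 2.9]{Vasilev00} and remarking that such results also follow from the considerations of the previous section, and your argument is precisely that specialization of Proposition \ref{a_a+} and Corollary \ref{A_k} to $\ag=\partial/\partial\overline z$, $\ag^\dag=-\partial/\partial z+\overline z$ acting on $L_2(\C,d\mu)$. Your list of hypotheses to verify (mutual adjointness, $[\ag,\ag^\dag]=I$, $\ker\ag=F^2(\C)$, and density of the union of the lifted spaces) is the right bookkeeping, and deferring those analytic details to \cite{Vasilev00} matches the paper's own treatment.
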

\noindent It is also worth recalling that $F_1^2(\mathbb{C})=F_{(1)}^2(\mathbb{C})=F^2(\mathbb{C})$ is the kernel of the operator $\ag$.

As a direct corollary we have

\begin{corollary} \label{true-k_rep} \cite[Corollary 2.10]{Vasilev00}
Each function $\psi(z,\overline{z})$ from the true-$k$-Fock space $F_{(k)}^2(\C)$ is uniquely defined by a function $\varphi(z) \in F^2(\C)$ and has the form
\begin{equation} \label{eq:F_(k)}
  \psi(z)=\psi(z,\overline{z})=\sum_{m=0}^{k-1} (-1)^m
  \frac{\sqrt{(k-1)!}}{m!\,(k-1-m)!}\,
  \overline{z}^{\,k-1-m}\,\varphi^{(m)}(z),
\end{equation}
where $\varphi^{(m)}$ is the $m$-th derivative
of the (analytic) function $\varphi$, and
\begin{equation*}
  \|\psi\|_{F_{(k)}^2(\C)} = \|\varphi\|_{F^2(\C)}.
\end{equation*}
\end{corollary}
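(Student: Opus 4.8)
The plan is to read the formula off the isometric isomorphism $\mathbf{A}_{(k)}$ of the preceding proposition. Since $\mathbf{A}_{(k)} = \frac{1}{\sqrt{(k-1)!}}\,(\ag^\dag)^{k-1}|_{F^2(\C)}$ is an isometric isomorphism of $F^2(\C)$ onto $F_{(k)}^2(\C)$, every $\psi \in F_{(k)}^2(\C)$ can be written as $\psi = \mathbf{A}_{(k)}\varphi$ for a unique $\varphi \in F^2(\C)$, and $\|\psi\|_{F_{(k)}^2(\C)} = \|\varphi\|_{F^2(\C)}$ holds automatically. So the whole content of the corollary reduces to making the action of $(\ag^\dag)^{k-1}$ on an analytic function explicit.

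First I would note that in the realization $\ag^\dag = -\frac{\pa}{\pa z} + \overline{z}$ the multiplication operator $\overline{z}$ and the differentiation $\frac{\pa}{\pa z}$ commute, since $z$ and $\overline{z}$ are independent variables: $\frac{\pa}{\pa z}(\overline{z}\,f) = \overline{z}\,\frac{\pa f}{\pa z}$. Consequently the two summands of $\ag^\dag$ commute and the binomial theorem applies verbatim,
\[
 (\ag^\dag)^{k-1} = \Bigl(\overline{z} - \frac{\pa}{\pa z}\Bigr)^{k-1} = \sum_{m=0}^{k-1}\binom{k-1}{m}(-1)^m\,\overline{z}^{\,k-1-m}\Bigl(\frac{\pa}{\pa z}\Bigr)^{m},
\]
which can equally be confirmed by a one-line induction on $k$ if one prefers not to invoke the operator binomial formula directly.

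Applying this to $\varphi = \varphi(z) \in F^2(\C)$, for which $\bigl(\frac{\pa}{\pa z}\bigr)^{m}\varphi = \varphi^{(m)}$, I get
\[
 \psi = \mathbf{A}_{(k)}\varphi = \frac{1}{\sqrt{(k-1)!}}\sum_{m=0}^{k-1}\binom{k-1}{m}(-1)^m\,\overline{z}^{\,k-1-m}\,\varphi^{(m)}(z),
\]
and the elementary simplification $\frac{1}{\sqrt{(k-1)!}}\binom{k-1}{m} = \frac{\sqrt{(k-1)!}}{m!\,(k-1-m)!}$ turns this into exactly \eqref{eq:F_(k)}. Uniqueness of $\varphi$ and the norm identity are then immediate from the injectivity and the isometry property of $\mathbf{A}_{(k)}$, respectively.

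I do not expect a genuine obstacle: all the analytic input (the isometry and surjectivity of $\mathbf{A}_{(k)}$, hence the well-definedness of $\varphi$ and the membership of $\psi$ in $F_{(k)}^2(\C)$) is already supplied by the previous proposition, and what remains is the purely algebraic binomial expansion enabled by the commutation of $\overline{z}$ with $\frac{\pa}{\pa z}$. The only point deserving a word of care is that the right-hand side of \eqref{eq:F_(k)} a priori lies in $L_2(\C,d\mu)$ and in $F_{(k)}^2(\C)$; but this requires no separate argument, since by construction that expression equals $\mathbf{A}_{(k)}\varphi$.
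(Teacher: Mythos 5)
Your proposal is correct and matches the argument the paper relies on: the corollary is presented there precisely as an immediate consequence of the isometric isomorphism $\mathbf{A}_{(k)}=\frac{1}{\sqrt{(k-1)!}}(\ag^\dag)^{k-1}|_{F^2(\C)}$, with the explicit formula obtained exactly as you do, by binomially expanding $(\overline{z}-\frac{\pa}{\pa z})^{k-1}$ using the commutativity of $\overline{z}$ and $\frac{\pa}{\pa z}$. The coefficient simplification and the norm identity are handled as in the source.
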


\medskip
The above proposition permits us to characterize explicitly elements of the $k$-Fock space $F_k^2(\C)$.

\begin{theorem} \label{poly<-Fock}
 Each function $\varphi(z,\overline{z}) \in F_k^2(\C)$ is uniquely defined by $k$ functions $f_1(z)$, ..., $f_k(z)$ from the Fock space $F^2(\C)$ and admits the representation
 \begin{equation} \label{by-ell}
   \varphi(z,\overline{z})= \sum_{\ell=1}^{k} \overline{z}^{\ell-1} \cdot
   \varphi_{\ell} (z),
\end{equation}
where the analytic in $\C$ functions $\varphi_{\ell}(z)$ have the form
\begin{equation*}
 \varphi_{\ell}(z) = \sum_{p=\ell}^k (-1)^{p-\ell} \frac{\sqrt{(p-1)!}}{(p-\ell)!\,(\ell-1)!}\, f_p^{(p-\ell)}(z).
\end{equation*}
\end{theorem}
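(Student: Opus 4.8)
The plan is to assemble the representation \eqref{by-ell} directly from the orthogonal decomposition \eqref{k+p} together with the explicit description of each true-$k$-Fock space given in Corollary \ref{true-k_rep}. First I would write an arbitrary $\varphi \in F^2_k(\C)$ as $\varphi = \sum_{p=1}^k \psi_p$ with $\psi_p \in F^2_{(p)}(\C)$, which is possible and unique by \eqref{k+p}. By Corollary \ref{true-k_rep}, each $\psi_p$ is uniquely determined by an analytic function $f_p \in F^2(\C)$ via
\[
 \psi_p(z,\overline z) = \sum_{m=0}^{p-1} (-1)^m \frac{\sqrt{(p-1)!}}{m!\,(p-1-m)!}\,\overline z^{\,p-1-m} f_p^{(m)}(z).
\]
So the functions $f_1,\dots,f_k$ exist and are unique; the content of the theorem is just to collect the coefficient of each power $\overline z^{\ell-1}$ across all $p$.

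Next I would reindex. In the inner sum of $\psi_p$ set $\ell - 1 = p - 1 - m$, i.e.\ $m = p - \ell$, so that $m$ ranges over $0,\dots,p-1$ exactly when $\ell$ ranges over $1,\dots,p$. Substituting,
\[
 \psi_p(z,\overline z) = \sum_{\ell=1}^{p} (-1)^{p-\ell} \frac{\sqrt{(p-1)!}}{(p-\ell)!\,(\ell-1)!}\,\overline z^{\,\ell-1} f_p^{(p-\ell)}(z).
\]
Summing over $p$ from $1$ to $k$ and exchanging the order of summation (the pair $(p,\ell)$ runs over $1 \le \ell \le p \le k$, equivalently $1 \le \ell \le k$ and $\ell \le p \le k$) gives
\[
 \varphi(z,\overline z) = \sum_{\ell=1}^{k} \overline z^{\,\ell-1} \Bigl( \sum_{p=\ell}^{k} (-1)^{p-\ell} \frac{\sqrt{(p-1)!}}{(p-\ell)!\,(\ell-1)!}\, f_p^{(p-\ell)}(z) \Bigr),
\]
which is exactly \eqref{by-ell} with the stated $\varphi_\ell$. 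The analyticity of $\varphi_\ell$ is clear since each $f_p$ is analytic and differentiation preserves analyticity.

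For uniqueness of the $f_p$ (hence, modulo the reindexing bijection, of the whole representation) I would invoke the uniqueness clause of Corollary \ref{true-k_rep} applied componentwise: the decomposition $\varphi = \sum_p \psi_p$ is unique because the sum \eqref{k+p} is orthogonal, and each $\psi_p \leftrightarrow f_p$ is a bijection. One could alternatively note directly that the map $(f_1,\dots,f_k) \mapsto \varphi$ is triangular in $p$ — the top power $\overline z^{\,k-1}$ sees only $f_k$, then $\overline z^{\,k-2}$ sees $f_{k-1}$ and a derivative of $f_k$, and so on — so it is invertible, but leaning on the earlier corollary is cleaner. I do not anticipate a genuine obstacle here; the only place to be careful is the bookkeeping in the change of summation variable and the swap of the order of the double sum, making sure the index ranges match. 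The substance of the result is entirely carried by the already-established structure theorem for true-$k$-Fock spaces.
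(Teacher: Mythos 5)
Your proposal is correct and follows essentially the same route as the paper's own proof: decompose $\varphi$ via \eqref{k+p}, express each true-poly-Fock component through Corollary \ref{true-k_rep}, reindex with $m=p-\ell$, and collect the coefficients of $\overline{z}^{\,\ell-1}$. The extra remarks on uniqueness and the triangular structure of the map $(f_1,\dots,f_k)\mapsto\varphi$ are consistent with (and slightly more explicit than) what the paper records.
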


\begin{proof}
By \eqref{k+p}, each function $\varphi(z,\overline{z}) \in F_k^2(\C)$ admits the unique representation
\begin{equation} \label{by-psi}
 \varphi(z,\overline{z})= \sum_{p=1}^k \psi_{(p)}(z,\overline{z}),
\end{equation}
and, by \eqref{eq:F_(k)},
\begin{eqnarray*}
  \psi_{(p)}(z,\overline{z})&=&\sum_{m=0}^{p-1} (-1)^m
  \frac{\sqrt{(p-1)!}}{m!\,(p-1-m)!}\, \overline{z}^{\,p-1-m}\,f_p^{(m)}(z) \\
  &=& \sum_{\ell=1}^{p} (-1)^{p-\ell}
  \frac{\sqrt{(p-1)!}}{(p-\ell)!\,(\ell-1)!}\, \overline{z}^{\,\ell-1}\,f_p^{(p-\ell)}(z),
\end{eqnarray*}
for a certain $f_p(z) \in F^2(\C)$. Substituting now the above expression for $\psi_{(p)}(z,\overline{z})$ to \eqref{by-psi}, and collecting terms with $\overline{z}^{\ell-1}$, we obtain that
\begin{equation*}
   \varphi(z,\overline{z})= \sum_{\ell=1}^{k} \overline{z}^{\ell-1} \cdot
   \varphi_{\ell} (z),
\end{equation*}
where
\begin{equation*}
 \varphi_{\ell}(z) = \sum_{p=\ell}^k (-1)^{p-\ell} \frac{\sqrt{(p-1)!}}{(p-\ell)!\,(\ell-1)!}\, f_p^{(p-\ell)}(z). \qedhere
\end{equation*}
\end{proof}
\noindent Note that $\varphi_k(z)=\frac{f_k(z)}{\sqrt{(k-1)!}} \in F^2(\C)$, while the others $\varphi_{\ell}(z)$, with $\ell = 1,2,\ldots, k-1$, generically do not belong to  $F^2(\C)$.

\begin{observation} {\rm
 For a function $\varphi(z,\overline{z}) \in F_{(k)}^2(\C)$ represented in form \eqref{by-ell}, the following recursive formulas allow us to recover all functions $f_1,\, f_2,\,\ldots,\, f_k$ from $F^2(\C)$ generated it
\begin{eqnarray*}
 f_k(z) &=& \sqrt{(k-1)!}\,\varphi_k(z), \ \ \ldots \ \ , \\
 f_{\ell}(z) &=& \sqrt{(\ell-1)!}\left[\varphi_{\ell}(z) - \sum_{p=\ell+1}^k (-1)^{p-\ell} \frac{\sqrt{(p-1)!}}{(p-\ell)!\,(\ell-1)!}\, f_p^{(p-\ell)}(z) \right], \quad \mathrm{for \ all} \ \ \ell < k.
\end{eqnarray*}
}
\end{observation}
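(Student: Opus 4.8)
The plan is to recognize that the formula of Theorem \ref{poly<-Fock} expressing each coefficient $\varphi_\ell$ in terms of $f_\ell,\ldots,f_k$ is a \emph{triangular} linear system, and then to invert it by back-substitution. The starting point is the identity
\begin{equation*}
 \varphi_{\ell}(z) = \sum_{p=\ell}^k (-1)^{p-\ell} \frac{\sqrt{(p-1)!}}{(p-\ell)!\,(\ell-1)!}\, f_p^{(p-\ell)}(z),
\end{equation*}
valid for each $\ell = 1,\ldots,k$ for a function $\varphi$ written in the form \eqref{by-ell}. The crucial structural observation is that $\varphi_\ell$ depends only on the $f_p$ with $p \ge \ell$, and that the coefficient of the diagonal term $p=\ell$ equals $\sqrt{(\ell-1)!}/(\ell-1)! = 1/\sqrt{(\ell-1)!}$, which never vanishes. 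Moreover, since $p-\ell = 0$ in that term, $f_\ell$ enters \emph{undifferentiated}, so the system can be solved for $f_\ell$ by pure algebra, with no need to invert any differential operator.

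First I would separate this diagonal summand, rewriting the identity as
\begin{equation*}
 \varphi_{\ell}(z) = \frac{1}{\sqrt{(\ell-1)!}}\, f_\ell(z) + \sum_{p=\ell+1}^k (-1)^{p-\ell} \frac{\sqrt{(p-1)!}}{(p-\ell)!\,(\ell-1)!}\, f_p^{(p-\ell)}(z),
\end{equation*}
and then multiply through by $\sqrt{(\ell-1)!}$ and solve for $f_\ell$; this reproduces verbatim the asserted recursion for $\ell < k$. The terminal case $\ell = k$ has an empty sum, giving immediately $f_k = \sqrt{(k-1)!}\,\varphi_k$, which is the stated base of the recursion. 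Thus the two displayed formulas are nothing more than the diagonal-isolated form of the Theorem \ref{poly<-Fock} relations, read backwards.

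Well-definedness of the scheme follows by downward induction on $\ell$: the base case $\ell=k$ produces $f_k$ directly from $\varphi_k$, and once $f_{\ell+1},\ldots,f_k$ are known, the recursion determines $f_\ell$ from $\varphi_\ell$ together with the already-computed higher functions and their derivatives. Since each $\varphi_\ell$ is analytic---being, by Theorem \ref{poly<-Fock}, a finite combination of derivatives of analytic $F^2(\C)$-functions---the right-hand side is analytic, and membership of the recovered $f_\ell$ in $F^2(\C)$, as well as their coincidence with the original generating functions, is guaranteed by the uniqueness already established in Theorem \ref{poly<-Fock}. There is essentially no analytic obstacle: the entire content is the non-vanishing of the diagonal coefficient. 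The only point requiring mild care is the bookkeeping of derivative orders, since each $f_p$ re-enters through its $(p-\ell)$-th derivative, but this is routine and introduces no genuine difficulty.
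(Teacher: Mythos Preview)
Your proposal is correct and matches the paper's treatment: the paper gives no separate proof of this Observation, presenting it as an immediate consequence of the triangular structure of the formula in Theorem~\ref{poly<-Fock}, which is exactly the back-substitution you describe. Your isolation of the nonvanishing diagonal term $1/\sqrt{(\ell-1)!}$ and downward induction make explicit what the paper leaves tacit.
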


\medskip
Further, each true-$k$-Fock space is a reproducing kernel Hilbert space and
\begin{theorem}\cite[Lemma 3.2 and Theorem 3.3]{Vasilev00}
The operator
\begin{equation} \label{P_(k)}
  (P_{(k)}f)(z)=\langle f(\zeta),\,q_z^{\{k\}}(\zeta)\rangle =
  \int_{\C} f(\zeta)\,q_{\zeta}^{\{k\}}(z)\,d\mu(\zeta)
\end{equation}
is the orthogonal Bargmann projection of $L_2(\C,d\mu)$ onto the
true-$k$-Fock space $F^2_{(k)}(\C)$,
where the reproducing kernel $q_z^{\{k\}}(\zeta)$ is given by
\begin{equation*}
  q_z^{\{k\}}(\zeta) = \frac{1}{(k-1)!} \, \left(- \frac{\pa}{\pa \zeta} +
   \overline{\zeta} \right)^{k-1} \left(- \frac{\pa}{\pa \overline{z}} +
   z \right)^{k-1} e^{\zeta\overline{z}},  \ \ \ \ \zeta \in \C.
\end{equation*}
For each $z \in \C$, the function $q_z^{\{k\}}(\zeta)$ belongs to $F^2_{(k)}(\C)$, has the form
\begin{equation*} \label{eq:q}
  q_z^{\{k\}}(\zeta)=e^{\zeta\overline{z}}
  p_{k-1}((z-\zeta)(\overline{\zeta}-\overline{z})),
\end{equation*}
for certain real coefficient polynomial $p_{k-1}(\lambda)$, and
\begin{equation*}
  q_{\zeta}^{\{k\}}(z) = \overline{q_z^{\{k\}}(\zeta)}.
\end{equation*}
\end{theorem}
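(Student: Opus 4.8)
The plan is to obtain $q_z^{\{k\}}$ by transporting the reproducing kernel of the Fock space $F^2(\C)$ through the isometric isomorphism of Corollary~\ref{A_k}, specialised to the pair \eqref{a^dag-a}. In that setting Corollary~\ref{A_k} says that $\mathbf{A}_{(k)}=\frac{1}{\sqrt{(k-1)!}}(\ag^\dag)^{k-1}=\frac{1}{\sqrt{(k-1)!}}(-\pa_z+\overline z)^{k-1}$ is an isometric isomorphism of $F^2(\C)$ onto $F^2_{(k)}(\C)$ with inverse $\frac{1}{\sqrt{(k-1)!}}\,\ag^{k-1}$, and I would also use the classical fact that $F^2(\C)$ is a reproducing kernel space with kernel $K_z(\zeta)=e^{\zeta\overline z}$, i.e.\ $\vphi(z)=\langle\vphi,K_z\rangle$ for every $\vphi\in F^2(\C)$; here $\langle\cdot,\cdot\rangle$ denotes the $L_2(\C,d\mu)$ pairing, conjugate linear in the second slot.

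The first step is to identify point evaluation on $F^2_{(k)}(\C)$. Writing $\psi=\mathbf{A}_{(k)}\vphi$ with $\vphi\in F^2(\C)$ (surjectivity of $\mathbf{A}_{(k)}$ makes this general), I would start from
\begin{equation*}
 \psi(z)=\tfrac{1}{\sqrt{(k-1)!}}\,(-\pa_z+\overline z)^{k-1}\langle\vphi(\zeta),e^{\zeta\overline z}\rangle_\zeta,
\end{equation*}
differentiate under the integral (legitimate by Gaussian domination), move the operator across the pairing — which, since it is conjugate linear in the second slot, turns $-\pa_z+\overline z$ into $-\pa_{\overline z}+z$, still acting in $z$ on the second argument — and use the elementary identity $(-\pa_{\overline z}+z)^{k-1}e^{\zeta\overline z}=(z-\zeta)^{k-1}e^{\zeta\overline z}$, to arrive at $\psi(z)=\big\langle\vphi(\zeta),\tfrac{1}{\sqrt{(k-1)!}}(z-\zeta)^{k-1}e^{\zeta\overline z}\big\rangle_\zeta$. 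The function $\zeta\mapsto(z-\zeta)^{k-1}e^{\zeta\overline z}$ is holomorphic in $\zeta$ and lies in $L_2(\C,d\mu)$ (a polynomial times $e^{\zeta\overline z}$ under a Gaussian), hence in $F^2(\C)$; applying $\mathbf{A}_{(k)}$ in the $\zeta$-variable and using that it is an isometry yields $\psi(z)=\langle\psi,q_z^{\{k\}}\rangle$ with $q_z^{\{k\}}=\mathbf{A}_{(k)}^{\zeta}\big[\tfrac{1}{\sqrt{(k-1)!}}(z-\cdot)^{k-1}e^{(\cdot)\overline z}\big]$, which after re-inserting $(z-\zeta)^{k-1}e^{\zeta\overline z}=(-\pa_{\overline z}+z)^{k-1}e^{\zeta\overline z}$ is exactly
\begin{equation*}
 q_z^{\{k\}}(\zeta)=\frac{1}{(k-1)!}\,(-\pa_\zeta+\overline\zeta)^{k-1}(-\pa_{\overline z}+z)^{k-1}e^{\zeta\overline z}.
\end{equation*}
Being the $\mathbf{A}_{(k)}$-image of an element of $F^2(\C)$, $q_z^{\{k\}}$ lies in $F^2_{(k)}(\C)$.

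The second step handles the projection and symmetry claims. With $P_{(k)}$ defined by \eqref{P_(k)}, since $q_z^{\{k\}}\in F^2_{(k)}(\C)$ one has $\langle f,q_z^{\{k\}}\rangle=0$ for all $z$ whenever $f\perp F^2_{(k)}(\C)$, while $\langle\psi,q_z^{\{k\}}\rangle=\psi(z)$ for $\psi\in F^2_{(k)}(\C)$ by the reproducing identity just obtained; splitting $f$ along $L_2(\C,d\mu)=F^2_{(k)}(\C)\oplus F^2_{(k)}(\C)^{\perp}$ then shows $P_{(k)}$ agrees, as a function, with the orthogonal projection onto $F^2_{(k)}(\C)$. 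The two integral forms in \eqref{P_(k)} coincide once one sets $q_\zeta^{\{k\}}(z):=\overline{q_z^{\{k\}}(\zeta)}$, which is forced by the Hermitian symmetry of a reproducing kernel: applying the reproducing identity to $\psi=q_z^{\{k\}}$ gives $q_z^{\{k\}}(\zeta)=\langle q_z^{\{k\}},q_\zeta^{\{k\}}\rangle=\overline{q_\zeta^{\{k\}}(z)}$.

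The last step is the closed form. Setting $u=z-\zeta$ and $v=\overline\zeta-\overline z$ (so $uv=-|z-\zeta|^2$ is real), I would check that conjugation by $e^{\zeta\overline z}$ turns $-\pa_{\overline z}+z$ into $\pa_v+u$ and $-\pa_\zeta+\overline\zeta$ into $\pa_u+v$ on polynomials in $u,v$, whence $(k-1)!\,q_z^{\{k\}}(\zeta)=e^{\zeta\overline z}(\pa_u+v)^{k-1}(\pa_v+u)^{k-1}\!\cdot 1=e^{\zeta\overline z}(\pa_u+v)^{k-1}u^{k-1}$; then the operator identity $(\pa_u+v)^{k-1}=e^{-uv}\pa_u^{k-1}e^{uv}$ together with the Leibniz rule gives $(\pa_u+v)^{k-1}u^{k-1}=\sum_{i=0}^{k-1}\binom{k-1}{i}\frac{(k-1)!}{i!}(uv)^i$, a polynomial in $uv=(z-\zeta)(\overline\zeta-\overline z)$ with nonnegative integer coefficients (proportional to the Laguerre polynomial $L_{k-1}$). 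Dividing by $(k-1)!$ gives $q_z^{\{k\}}(\zeta)=e^{\zeta\overline z}p_{k-1}\big((z-\zeta)(\overline\zeta-\overline z)\big)$ with real $p_{k-1}$, and $\overline{q_z^{\{k\}}(\zeta)}=q_\zeta^{\{k\}}(z)$ then follows from the reality of the argument and of $p_{k-1}$. I do not expect a genuine obstacle; the only delicate points are the sign/Wirtinger bookkeeping ($\pa_z\leftrightarrow\pa_{\overline z}$) when transporting the operator across the pairing while keeping the roles of $z$ and $\zeta$ straight, and the routine combinatorial identity for $(\pa_u+v)^{k-1}u^{k-1}$.
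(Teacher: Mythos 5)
Your argument is correct and complete. The paper itself does not prove this theorem — it is imported verbatim from the cited reference [Vasilev00, Lemma 3.2 and Theorem 3.3] — so there is no internal proof to compare against; but your derivation is a legitimate self-contained one, and it is built precisely on the machinery the paper does develop: the isometric isomorphism $\mathbf{A}_{(k)}=\frac{1}{\sqrt{(k-1)!}}(\ag^\dag)^{k-1}:F^2(\C)\to F^2_{(k)}(\C)$ of Corollary \ref{A_k}, specialised to $\ag=\pa_{\overline z}$, $\ag^\dag=-\pa_z+\overline z$. Transporting the classical Fock kernel $e^{\zeta\overline z}$ through this unitary is exactly the right mechanism, and your sign bookkeeping checks out: $(-\pa_z+\overline z)$ applied to $\overline{e^{\zeta\overline z}}=e^{\overline\zeta z}$ produces $(\overline z-\overline\zeta)e^{\overline\zeta z}=\overline{(-\pa_{\overline z}+z)e^{\zeta\overline z}}$, the intermediate function $(z-\zeta)^{k-1}e^{\zeta\overline z}$ does lie in $F^2(\C)$, and unitarity of $\mathbf{A}_{(k)}$ then gives both the reproducing identity on $F^2_{(k)}(\C)$ and the membership $q_z^{\{k\}}\in F^2_{(k)}(\C)$, from which the projection and Hermitian-symmetry claims follow by the standard reproducing-kernel argument. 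The closed form is also right: with $u=z-\zeta$, $v=\overline\zeta-\overline z$ the conjugated operators are $\pa_v+u$ and $\pa_u+v$, and $(\pa_u+v)^{k-1}u^{k-1}=\sum_{i=0}^{k-1}\binom{k-1}{i}\frac{(k-1)!}{i!}(uv)^i$, i.e. $p_{k-1}(\lambda)=L_{k-1}(-\lambda)$ up to normalisation, a real polynomial of the real variable $uv=-|z-\zeta|^2$, which yields the symmetry relation a second time. The only cosmetic remark is that the Hermitian symmetry is obtained twice (abstractly and from the explicit formula); one of the two suffices.
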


\bigskip
In a similar way as in Section \ref{se:overture} (see \eqref{bk-ak}) the condition (\ref{kA}) is equivalent to
\begin{equation*}
 (\ag^\dag)^k\, \ag^k\, \varphi = \left(-\frac{\pa}{\pa z} + \overline{z}\right)^k\left(\frac{\pa}{\pa \overline{z}} \right)^k  \varphi\ =\ 0\
\end{equation*}
with
\begin{equation*}
 {(\ag^\dag)}^k\,\ag^k\, = \, \prod_{m=0}^{k-1}\left(\ag^\dag\, \ag - m\right),
\end{equation*}
where the right hand side is a product of the so-called Euler-Cartan operators
\begin{equation} \label{Cartan}
 I_0^{(m)}\ =\  \ag^\dag \ag - m\ =\  \left(-\frac{\pa}{\pa z} + \overline{z}\right)\,\frac{\pa }{\pa \overline{z}} - m,
   \ \quad m=0,1,\ldots k-1 .
\end{equation}
corresponding to the representation marked by $m$ of the algebra $\mathfrak{sl}(2)$.
Formulas \eqref{k-1->k} and \eqref{k->k-1} imply that $I_0^{(m)}$ boundedly acts on each $F_{(p)}^2$ and has  $F_{(m)}^2$ as its kernel. We note as well that each of the operators
\begin{eqnarray}
\label{sl2ag-ag+}
   J^+_k &=&  (\ag^\dag)^2 \ag -
   (k-1)\, \ag^\dag = \ag^\dag (\ag^\dag \ag - (k-1)\,I)
   = \left(\overline{z}-\textstyle{\frac{\partial}{\partial z}}\right)\left(\overline{z} \textstyle{\frac{\partial}{\partial \overline{z}}} - \frac{\partial^2}{\partial z \partial \overline{z}}- (k-1)I\right), \non
\\
   J^0_k &=&  \ag^\dag \ag - \textstyle{\frac{k-1}{2}}I = \overline{z} \frac{\partial}{\partial \overline{z}} - \frac{\partial^2}{\partial z \partial \overline{z}}- \textstyle{\frac{k-1}{2}}I,
\\
   J^-_k &=& \ag = \textstyle{\frac{\partial}{\partial \overline{z}}}\non .
\end{eqnarray}
is bounded on each poly-Fock space $F_{p}^2$.

It is straightforward that for all positive integers $k$ the above operators  $J^+_k$, $J^0_k$ and $J^-_k$ act invariantly on $k$-Fock space $F_k^2$. Thus, the poly-Fock (poly-analytic) spaces $F^2_k(\C)$, $k=1,2,...$ can be defined alternatively as the maximal (in a sense of Observation \ref{max-inv}) invariant subspaces for the action the operators $J^+_k$,
$J^0_k$, $J^-_k$ in $L_2(\C,d\mu)$, obeying the $\mathfrak{sl}(2)$-algebra commutation relations.

\medskip
We characterize now some invariance properties of the $k$-poly-Fock spaces. \\
As well known any motion of the complex plane $\C$, a one-to-one mapping of $\C$ that preserves distances and does not changes the orientation, is a combination of rotations ($z \mapsto w=\alpha z$, $|\alpha|=1$) and  parallel translations ($z \mapsto w=z+a$, $a \in \C$),
generating the group $E_2$ of motions in $\C$. The corresponding operators, that act unitarily on both $L_2(\C,d\mu)$ and $F^2(\C)$, have the form of rotation $(U_{\alpha}f)(w)= f(\overline{\alpha}\,w)$ and the Weyl operator (see e.g. \cite[Section 2]{Zhu-Fock}) $(W_af)(w) = e^{\overline{a}w-\frac{1}{2}|a|^2} f(w-a)$, being a shift operator with a gauge factor.

\begin{theorem} \label{inv-C}
 For each $k$, the true $k$-Fock space $F^2_{(k)}(\C)$ is invariant under the action of the operators $U_{\alpha}$, $|\alpha|=1$ and $W_a$, $a \in \C$, where they act isometrically.
\end{theorem}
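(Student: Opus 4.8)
The plan is to reduce the statement about the true-$k$-Fock space $F^2_{(k)}(\C)$ to the already-known invariance of the ordinary Fock space $F^2(\C)=F^2_{(1)}(\C)$ under $U_\alpha$ and $W_a$, using the isometric isomorphism $\mathbf{A}_{(k)}=\frac{1}{\sqrt{(k-1)!}}(\ag^\dag)^{k-1}|_{F^2(\C)}$ of Corollary~\ref{true-k_rep} (equivalently \eqref{1->k}). Concretely, since every $\psi\in F^2_{(k)}(\C)$ is of the form $\psi=\mathbf{A}_{(k)}\varphi$ for a unique $\varphi\in F^2(\C)$, it suffices to show that $U_\alpha \mathbf{A}_{(k)}\varphi$ and $W_a\mathbf{A}_{(k)}\varphi$ again lie in $F^2_{(k)}(\C)$, and that $\|U_\alpha\psi\|=\|W_a\psi\|=\|\psi\|$. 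The norm preservation is automatic once we know these operators map $F^2_{(k)}(\C)$ into itself, because $U_\alpha$ and $W_a$ are already unitary on all of $L_2(\C,d\mu)$; so the whole content is the inclusion $U_\alpha\bigl(F^2_{(k)}\bigr)\subset F^2_{(k)}$ and likewise for $W_a$.

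The cleanest route to the inclusion is via the defining equation \eqref{kA}: $F^2_{(k)}(\C)\subset F^2_k(\C)=\ker\ag^k$, so I would first show $U_\alpha$ and $W_a$ commute with $\ag^k$ up to an invertible factor, hence preserve $F^2_k(\C)$; then show they preserve each graded piece. For $U_\alpha$ this is a direct change of variables: with $w=\overline\alpha z$ one computes $\ag\,U_\alpha = \overline\alpha\,U_\alpha\,\ag$ and $\ag^\dag U_\alpha=\alpha\,U_\alpha\,\ag^\dag$ (here $|\alpha|=1$), so $U_\alpha$ intertwines the whole $\mathfrak{sl}(2)$-triple \eqref{sl2ag-ag+} up to these scalars; in particular $U_\alpha$ commutes with $\ag^\dag\ag$ and with the Euler--Cartan operators $I_0^{(m)}$ of \eqref{Cartan}, whose kernels cut out the $F^2_{(m)}$, so $U_\alpha(F^2_{(k)})\subset F^2_{(k)}$. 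For $W_a$ the analogous commutation relations are $\ag\,W_a=W_a(\ag+\overline a\,I)$ and $\ag^\dag W_a=W_a\,\ag^\dag$ — these follow because $\ag=\pa/\pa\overline z$ annihilates the analytic gauge factor $e^{\overline a w}$ and the shift $w\mapsto w-a$ just translates, while $\ag^\dag=-\pa/\pa z+\overline z$ transforms compatibly with the gauge. From $\ag\,W_a=W_a(\ag+\overline a I)$ one gets $\ag^k W_a = W_a(\ag+\overline a I)^k$, and since $W_a$ is injective and $(\ag+\overline a I)^k$ has the same kernel structure as $\ag^k$ shifted, $W_a$ preserves $F^2_k(\C)$; combined with $\ag^\dag W_a=W_a\ag^\dag$ it preserves the flag and hence each $F^2_{(k)}$.

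Alternatively, and perhaps more transparently, one can argue entirely on the reproducing-kernel side: by the previous theorem, $F^2_{(k)}(\C)$ is the range of the projection $P_{(k)}$ with kernel $q_z^{\{k\}}(\zeta)=e^{\zeta\overline z}p_{k-1}((z-\zeta)(\overline\zeta-\overline z))$, and $F^2_{(k)}$-membership is equivalent to reproducing against this kernel. One then checks that $q^{\{k\}}$ is invariant in the appropriate bivariate sense under the simultaneous action of $U_\alpha$ (resp. $W_a$) in the two variables — for rotations because $p_{k-1}$ depends only on the product $(z-\zeta)(\overline\zeta-\overline z)$, which is manifestly rotation-invariant, and for the Weyl operator because $(z-\zeta)$ and the gauge exponent transform so that the product combination is unchanged. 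Either way the key identities are the commutation relations above; the only mild obstacle is bookkeeping the gauge factor $e^{\overline a w-\frac12|a|^2}$ correctly in the $W_a$ case and verifying that the translated polynomial argument $((z-a)-(\zeta-a))((\overline\zeta-\overline a)-(\overline z-\overline a))=(z-\zeta)(\overline\zeta-\overline z)$ really is invariant, which it plainly is. I expect the rotation case to be essentially immediate and the Weyl-operator case to be the part requiring care, though still routine.
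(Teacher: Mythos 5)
Your overall strategy coincides with the paper's: both proofs reduce the invariance of $F^2_{(k)}(\C)$ to the known $k=1$ case by intertwining $U_\alpha$ and $W_a$ with the raising operator $\ag^\dag$ (the paper runs an induction on $k$ via \eqref{k-1->k}; you compose the steps into $\mathbf{A}_{(k)}$ and also invoke commutation with $\ag^\dag\ag$ and the Euler--Cartan operators, which is an equivalent packaging). The rotation case is correct, up to an immaterial interchange of $\alpha$ and $\overline\alpha$ in the intertwining constants, and your remark that isometry is automatic from unitarity of $U_\alpha$, $W_a$ on all of $L_2(\C,d\mu)$ once invariance is known is a slightly cleaner way to finish than the paper's.

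There is, however, a concrete error in the translation case. The relation $\ag\,W_a = W_a(\ag+\overline a\,I)$ is false: the gauge factor $e^{\overline a w}$ is holomorphic in $w$, so $\pa/\pa\overline w$ annihilates it, and the shift $w\mapsto w-a$ commutes with $\pa/\pa\overline w$; hence in fact $\ag\,W_a = W_a\,\ag$, and together with $\ag^\dag W_a = W_a\,\ag^\dag$ (which you state correctly, and which is the only identity the paper uses) the Weyl operator commutes with the whole pair $(\ag,\ag^\dag)$. This matters because the inference you draw from the false relation would not be valid even if the relation held: $\ag^k W_a = W_a(\ag+\overline a I)^k$ would show that $W_a$ maps $\ker(\ag+\overline a I)^k = e^{-\overline a\,\overline z}\ker \ag^k$ onto $\ker\ag^k$, i.e., it would show that $W_a$ does \emph{not} preserve $F^2_k(\C)$; the phrase ``has the same kernel structure as $\ag^k$ shifted'' does not repair this, since $e^{-\overline a\,\overline z}$ is not poly-analytic of any order. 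With the corrected identity $\ag W_a = W_a\ag$ your argument becomes immediate: $W_a$ preserves $\ker\ag^k=F^2_k(\C)$ defined by \eqref{kA}, commutes with each $I_0^{(m)}$ of \eqref{Cartan}, and hence preserves each $F^2_{(k)}(\C)=\frac{1}{\sqrt{(k-1)!}}(\ag^\dag)^{k-1}F^2(\C)$. The alternative reproducing-kernel route you sketch is also viable, but you would need to track the gauge factors $e^{\zeta\overline z}$ and $e^{\overline a w-\frac12|a|^2}$ explicitly rather than only the manifestly translation-invariant argument $(z-\zeta)(\overline\zeta-\overline z)$ of $p_{k-1}$.
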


\begin{proof}
Me mention first that a routine verification gives
\begin{equation*}
 U_{\alpha} \ag^\dag U_{\alpha}^{-1} = \alpha\, \ag^\dag \qquad \mathrm{and} \qquad W_a \ag^\dag W_a^{-1} = \ag^\dag
\end{equation*}
We proceed then by induction on $k$. For $k=1$, i.e. for the Fock space, the result is already known. Assume that for each $k-1$, $k=2,3,\ldots$, the result is valid: for each $\psi \in F^2_{(k-1)}(\C)$
\begin{equation*}
 U_{\alpha}\psi = \psi_{\alpha} \in F^2_{(k-1)}(\C) \qquad \mathrm{and} \qquad
 W_a \psi = \psi_a \in F^2_{(k-1)}(\C).
\end{equation*}
Then, using \eqref{k-1->k} and assuming that $\varphi = \frac{1}{\sqrt{k-1}}\ag^\dag \psi \in F^2_{(k)}(\C)$,
\begin{eqnarray*}
 U_{\alpha} \varphi &=& U_{\alpha} \frac{1}{\sqrt{k-1}}\ag^\dag U_{\alpha}^{-1}\left(U_{\alpha}\psi \right) = \alpha\,\frac{1}{\sqrt{k-1}}\ag^\dag \psi_{\alpha} \, \in \, F^2_{(k)}(\C), \\
 W_a \psi &=& W_a \frac{1}{\sqrt{k-1}}\ag^\dag W_a^{-1} \left(W_a\psi \right) =  \frac{1}{\sqrt{k-1}}\ag^\dag \psi_a \, \in \, F^2_{(k)}(\C).
\end{eqnarray*}
The isometric action of $U_{\alpha}$ and $W_a$ follows from the isometric action of $\frac{1}{\sqrt{k-1}}\ag^\dag$ on each $F^2_{(k-1)}(\C)$, for all $k=2,3,\ldots$, and $|\alpha|=1$.
\end{proof}

\begin{corollary}
 The unitary operators associated with motions of the plane $\C$ preserve flag \eqref{flag1} of $k$-Fock spaces.
\end{corollary}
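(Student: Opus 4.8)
The plan is to deduce this immediately from Theorem \ref{inv-C} together with the orthogonal decomposition \eqref{k+p}. First I would recall that every unitary operator $V$ associated with a motion of $\C$ is, by the structure of the group $E_2$, a finite composition of rotation operators $U_{\alpha}$ (with $|\alpha|=1$) and Weyl operators $W_a$ (with $a\in\C$); in particular it suffices to show that each of the operators $U_{\alpha}$ and $W_a$ maps each $k$-Fock space $F^2_k(\C)$ onto itself, since the composition of such maps has the same property.

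Next I would fix $k$ and invoke \eqref{k+p}, which writes $F^2_k(\C)=\bigoplus_{p=1}^{k}F^2_{(p)}(\C)$ as an orthogonal direct sum of the true-$p$-Fock spaces. By Theorem \ref{inv-C}, each summand $F^2_{(p)}(\C)$, $1\le p\le k$, is invariant (and the action isometric) under $U_{\alpha}$ and under $W_a$. A direct sum of invariant subspaces is invariant, so $U_{\alpha}\bigl(F^2_k(\C)\bigr)\subseteq F^2_k(\C)$ and $W_a\bigl(F^2_k(\C)\bigr)\subseteq F^2_k(\C)$. Applying the same argument to the inverses $U_{\alpha}^{-1}=U_{\overline\alpha}$ and $W_a^{-1}=W_{-a}$ (which are again operators of the same type) yields the reverse inclusions, hence $U_{\alpha}\bigl(F^2_k(\C)\bigr)=F^2_k(\C)$ and $W_a\bigl(F^2_k(\C)\bigr)=F^2_k(\C)$.

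Finally, since $V$ is a composition of maps each preserving every member $F^2_k(\C)$ of the chain \eqref{flag1}, the operator $V$ preserves $F^2_k(\C)$ for every $k\in\N$, i.e. $V$ preserves the flag \eqref{flag1}. There is no real obstacle here: the only point requiring a line of justification is that surjectivity (not merely invariance) of each $F^2_k(\C)$ follows from invertibility of the operators, so that the ordered chain is carried onto itself; this is immediate once one applies the invariance statement to both $V$ and $V^{-1}$.
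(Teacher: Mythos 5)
Your argument is correct and is exactly the intended deduction: the paper leaves the corollary without proof precisely because it follows immediately from Theorem \ref{inv-C} combined with the orthogonal decomposition \eqref{k+p}, which is the route you take. Your extra remark that invertibility of $U_{\alpha}$ and $W_a$ upgrades invariance to equality is a sensible small addition but changes nothing essential.
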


 The general description of operators boundedly acting and preserving $k$-poly-Fock spaces $F^2_k(\C)$, flag \ref{flag1} of these spaces, or all true-$k$-poly-Fock spaces $F^2_{(k)}(\C)$ is given by Proposition \ref{inv-F_k}  and Corollary~\ref{F_k-invariant}.

Note that among the operators acting on each $F^2_{(k)}(\C)$ there is well known and important for physics applications operator.

\begin{example} {\sf Landau magnetic Hamiltonian.}

{\rm Let us consider the operator
\begin{equation}
\label{tDelta}
 \widetilde{\Delta}\ =\ - \frac{\pa^2}{\pa z\,\pa\overline{z}} + \overline{z}\,\frac{\pa}{\pa\overline{z}}\ ,
\end{equation}
which (in suitable units and up to an additive constant) is a realization on $L_{2}(\C,d\mu)$ of the similarity-transformed Schr\"odinger operator (which we call here the {\it Landau magnetic Hamiltonian}) describing the transverse motion of a charged particle evolving in the complex plane $\mathbb{C}$ subject to a normal uniform constant magnetic field in asymmetric (Landau) gauge, see e.g. \cite{LL:1977}, Chapter XV, $\S$112. The first term in (\ref{tDelta}) has a meaning of the kinetic energy.

It is immediately recognized that (\ref{tDelta}) is the Euler-Cartan generator of the algebra $\mathfrak{sl}(2)$ (\ref{sl2ag-ag+}) at $k=1$ and can be rewritten as follows
\begin{equation*}
\label{aa}
  \widetilde{\Delta}\ =\ J^0_1\ =\ \ag^\dag \ag\ ,
\end{equation*}
with the operators $\ag^\dag$ and $\ag$ given by \eqref{a^dag-a}. By Corollary \ref{F_k-invariant} the operator $\widetilde{\Delta}$ acts invariantly on each $k$-poly-Fock space $F_{k}^2(\mathbb{C})$ and preserves the infinite flag of poly-Fock spaces.  Furthermore, by \eqref{k-1->k} and \eqref{k->k-1}, each true-poly-Fock space $F_{(k)}^2(\mathbb{C})$ is invariant subspace for the operator $\widetilde{\Delta} = \ag^\dag \ag$ as well and
\begin{equation*}
 \widetilde{\Delta}|_{F_{(k)}^2(\mathbb{C})} = (k-1) I\ , \quad\ k \in \mathbb{N}.
\end{equation*}
Hence, the spectrum of the operator $\widetilde{\Delta}$ consists of infinitely many equidistant eigenvalues, each of infinite multiplicity (Landau levels), they are of the form
\begin{equation*}
 \lambda_k = k-1\ , \qquad k \in \mathbb{N}\ ,
\end{equation*}
and the corresponding eigenspaces are nothing but the true-$k$-poly-Fock spaces $F_{(k)}^2(\mathbb{C})$.
}
\end{example}

Let us mention an alternative way to get the same result. Again, by Corollary \ref{F_k-invariant} the operator $\widetilde{\Delta}$ acts invariantly on each true-$k$-poly-Fock space $F_{(k)}^2(\mathbb{C})$, which consists of all functions $\psi = (\ag^\dag)^{k-1}f$, with $f \in F^2(\C)$. Consider then the corresponding spectral problem on $F_{(k)}^2(\mathbb{C})$
\begin{equation*}
 \widetilde{\Delta}\,\psi = \lambda \, \psi \qquad \mathrm{or} \qquad (\ag^\dag \ag)(\ag^\dag)^{k-1}f = \lambda \, (\ag^\dag)^{k-1}f.
\end{equation*}
Making use of \eqref{[a,b]} and $\ag f =0$ we arrive at
\begin{equation*}
 (k-1)(\ag^\dag)^{k-1}f = \lambda \, (\ag^\dag)^{k-1}f \qquad \mathrm{or} \qquad
 \widetilde{\Delta}\,\psi = (k-1)\psi, \quad \mathrm{for \ all} \ \ \psi \in F_{(k)}^2(\mathbb{C}).
\end{equation*}

\begin{observation}
 {\rm This leads us to the following approach to the spectral problems for the operators $\mathcal{D} = P(J_k^+,J_k^0,J_k^-)$, with $P$ being a polynomial with complex coefficients. By Proposition~\ref{inv-F_k} the operator $\mathcal{D}$ acts invariantly on the $k$-poly-Fock space $F_{k}^2(\mathbb{C})$, which implies the following form of the spectral problem for $\mathcal{D}$ restricted to $F_{k}^2(\mathbb{C})$
 \begin{equation*}
  \mathcal{D} \psi = \lambda \psi, \quad \mathrm{where} \quad \psi \in F_{k}^2(\mathbb{C}),
 \end{equation*}
or
\begin{equation*}
 \left(P(J_k^+,J_k^0,J_k^-) - \lambda\right)\sum_{j=0}^{k-1} (\ag^\dag)^j f_j = 0, \quad \mathrm{where} \ \ f_j \in F^2(\C), \ \ \mathrm{for \ all} \ \ j =0,1,\ldots, k-1.
\end{equation*}
Using \eqref{[a,b]} and $\ker \ag = F^2(\C)$, we rearrange then the operator on the left-hand side to its so-called Wick normal form (powers of $\ag^\dag$ to the left and powers of $\ag$ to the right). This results that a certain $k$-poly-analytic function, being the sum of its mutually orthogonal true-$(j+1)$-poly-analytic functions, $j =0,1,\ldots, k-1$, is identically zero. Setting these true-poly-analytic components to zero we come to a system of $k$ equations for $f_j \in F^2(\C)$ which can be solved then by the linear algebra means.
 }
\end{observation}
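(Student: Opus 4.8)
The plan is to make the described reduction precise and to check that it genuinely closes into a well-posed finite system. First I would record the starting representation: by the orthogonal decomposition \eqref{k+p} together with Corollary \ref{true-k_rep}, every $\psi\in F_k^2(\C)$ is written uniquely as $\psi=\sum_{j=0}^{k-1}(\ag^\dag)^j f_j$ with $f_j\in F^2(\C)$, the summand $(\ag^\dag)^j f_j$ lying in $F_{(j+1)}^2(\C)$. Substituting the generators \eqref{sl2ag-ag+}, the operator $\mathcal{D}=P(J_k^+,J_k^0,J_k^-)$ becomes a noncommutative polynomial in $\ag$ and $\ag^\dag$; invoking \eqref{commutator} repeatedly one moves every $\ag^\dag$ to the left of every $\ag$, so that after finitely many steps $\mathcal{D}=\sum_{p,q\ge 0}c_{pq}(\ag^\dag)^p\ag^q$ in Wick normal form. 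The reordering terminates because each use of $[\ag,\ag^\dag]=I$ lowers the total number of factors by one.

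Next I would compute the action on an annihilated vector. Since $\ag f_j=0$, iterating \eqref{[a,b]} gives $\ag^q(\ag^\dag)^j f_j=\frac{j!}{(j-q)!}(\ag^\dag)^{j-q}f_j$ for $q\le j$ (and $0$ for $q>j$), whence $(\ag^\dag)^p\ag^q(\ag^\dag)^j f_j=\frac{j!}{(j-q)!}(\ag^\dag)^{p+j-q}f_j$, a \emph{scalar} multiple of a single power of $\ag^\dag$ applied to the fixed function $f_j$. Collecting the contributions of $(\mathcal{D}-\la)\psi$ by the exponent $m=p+j-q$ of $\ag^\dag$, I obtain $(\mathcal{D}-\la)\psi=\sum_m(\ag^\dag)^m G_m$, where each $G_m=\sum_{j=0}^{k-1}\beta_{mj}(\la)f_j\in F^2(\C)$ is a linear combination of the $f_j$ with scalar coefficients affine in $\la$ (the term $-\la$ contributing $-\la f_m$ to $G_m$).

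The decisive step is that only exponents $m=0,1,\ldots,k-1$ can occur: this is precisely the invariance $(\mathcal{D}-\la)\psi\in F_k^2(\C)=\bigoplus_{m=0}^{k-1}F_{(m+1)}^2(\C)$ furnished by Proposition \ref{inv-F_k}. Because the true-$(m+1)$-Fock spaces $F_{(m+1)}^2(\C)=(\ag^\dag)^m F^2(\C)$ are mutually orthogonal (Proposition \ref{a_a+}) and $(\ag^\dag)^m$ is, up to normalization, an isometric isomorphism of $F^2(\C)$ onto $F_{(m+1)}^2(\C)$ (Corollary \ref{A_k}), the vanishing $\sum_m(\ag^\dag)^m G_m=0$ forces $G_m=0$ for every $m$. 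This yields exactly $k$ equations $\sum_{j=0}^{k-1}\beta_{mj}(\la)f_j=0$, i.e. a single homogeneous linear system $B(\la)\,(f_0,\ldots,f_{k-1})^t=0$ with a $k\times k$ \emph{scalar} matrix $B(\la)$.

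Finally I would identify $B(\la)$ with $P(M_k^+,M_k^0,M_k^-)-\la I$, up to the transpose and diagonal normalization dictated by the identification $F_k^2(\C)\cong\mathbb{R}^k\otimes F^2(\C)$ of Observation \ref{matrix} and the unitary equivalence $\widetilde{\mathcal{D}}=P(M_k^+,M_k^0,M_k^-)\otimes I$. The admissible eigenvalues are then the roots of $\det\!\big(P(M_k^+,M_k^0,M_k^-)-\la I\big)=0$, and each eigenvector of the $k\times k$ matrix prescribes the relations among the $f_j$, the functions $f_j\in F^2(\C)$ themselves remaining free, so that every eigenspace is $(\ker B(\la))\otimes F^2(\C)$. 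This is the promised reduction by linear algebra. The main obstacle is the bookkeeping at the invariance step: one must guarantee that the Wick-ordered action never produces a power $(\ag^\dag)^m$ with $m\ge k$, for otherwise the components would fail to close into a $k\times k$ system; it is exactly Proposition \ref{inv-F_k} that removes this obstacle and, simultaneously, that the coefficients $\beta_{mj}(\la)$ are scalar so that $B(\la)$ is a genuine $k\times k$ matrix over $\C$.
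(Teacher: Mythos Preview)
Your proposal is correct and follows precisely the route the paper sketches in the Observation: represent $\psi$ via the decomposition \eqref{k+p}, pass to Wick normal form using \eqref{commutator}/\eqref{[a,b]}, and separate by orthogonality of the true-poly-Fock components. The paper does not give a separate proof of this Observation---it is itself the explanation, illustrated immediately afterwards by the worked Example with $\widetilde{\Delta_2}$---and your write-up is a faithful and more detailed version of that same reasoning. Your additional identification of the resulting $k\times k$ scalar system with $P(M_k^+,M_k^0,M_k^-)-\lambda I$ (up to the diagonal normalization of Observation~\ref{matrix}) is a useful sharpening that the paper leaves implicit.
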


We illustrate this on a modified Landau Hamiltonian, where we change the "mark" $1$ in $J^0_1$  to  mark $2$ (for simplicity) with extra constant term and by adding the generators $J^{\pm}_2$ (\ref{sl2ag-ag+}),

\begin{example}
   {\rm
Let us introduce the operator
\begin{equation*}\label{tDelta-sl2-k}
 \widetilde{\Delta_2} = J^0_2  + \textstyle{\frac{1}{2}} \,I\ +\ \alpha J^+_2 \ +\ \beta J^-_2 = \ag^\dag \ag\ +\ \alpha\,\ag^\dag \left(\ag^\dag \ag - I \right) + \beta \ag,
\end{equation*}
with parameters $\alpha$ and $\beta$. The operator $\widetilde{\Delta_2}$ is a differential operator of the third order which becomes of the second order if $\alpha = 0$. By Proposition \ref{inv-F_k} the $2$-poly-Fock space $F_{2}^2(\mathbb{C})$ is its invariant subspace, while if $\alpha = 0$, by Corollary \ref{F_k-invariant}, $\widetilde{\Delta_2}$ acts invariantly on each poly-Fock space preserving the flag \eqref{flag1}.

The corresponding spectral problem has the form
\begin{equation*}
 \widetilde{\Delta_2} \psi = \lambda \psi, \quad \mathrm{where} \ \ \psi \in F_{2}^2(\mathbb{C}),  \quad \mathrm{or} \quad \left(\ag^\dag \ag\ + \alpha\,\ag^\dag (\ag^\dag \ag - I) + \beta \ag - \lambda\right)(\ag^\dag f_1 + f_0) = 0.
\end{equation*}
Let first $\alpha \neq 0$. Then rearranging to the Wick normal form and making all cancellations, we have
\begin{equation*}
 \ag^\dag((1-\lambda)f_1- \alpha f_0) + (\beta f_1 - \lambda f_0) = 0.
\end{equation*}
Equations
\begin{equation*}
 (1-\lambda)f_1- \alpha f_0 = 0 \quad \mathrm{and} \quad \beta f_1 - \lambda f_0 = 0
\end{equation*}
imply that $f_1$ and $f_0$ are related as $\alpha f_0 = \lambda(1-\lambda)f_1$ and the following formula for eigenvalues
\begin{equation*}
 \lambda_{1,2} = \frac{1}{2} (1 \pm \sqrt{1- 4\alpha \beta}).
\end{equation*}
Finally, the corresponding infinite dimensional eigenspaces consist of all $F_{2}^2(\mathbb{C})$-functions of the form
\begin{equation*}
 \psi_{1,2} = \left( \ag^\dag + \frac{1}{\alpha}(1 - \lambda_{1,2})\right)f, \quad f \in  F^2(\C).
\end{equation*}

Let now $\alpha = 0$, then
\begin{equation} \label{alpha=0}
 \widetilde{\Delta_2} = J^0_k  + \textstyle{\frac{1}{2}} \,I\ +\ \beta J^-_2 = \ag^\dag \ag\ + \beta \ag.
\end{equation}
It is clear that the second order operator $\widetilde{\Delta_2}$ is isospectral to $\widetilde{\Delta}$: the spectra remains unchanged, at the same time the corresponding eigenspaces are quite different.
A straightforward calculation shows that the infinite dimensional eigenspace that corresponds to the eigenvalue $\lambda_k = k-1$ consists of all $F_{k}^2(\mathbb{C})$-functions of the form
\begin{equation*}
 \psi = \left(\ag^\dag + \beta \right)^{k-1} f, \quad f \in F^2(\C).
\end{equation*}
That is, the operator \eqref{alpha=0} acts on each $k$-poly-Fock space $F_{k}^2(\mathbb{C})$ invariantly, has there $k$ eigenvalues $\lambda_j=j-1$, $j=1,2,\ldots,k$, whose corresponding eigenspaces are of the form
\begin{equation*}
 \left(\ag^\dag + \beta \right)^{j-1} f, \quad \mathrm{for \ all} \ \ f \in F^2(\C), \quad j=1,2,\ldots,k.
\end{equation*}
}
\end{example}

\section{Poly-analytic functions of several complex variables}
\label{multiFock}

Consider now spaces over $\C^d$ and introduce the spaces
\begin{eqnarray}
  L_2(\C^d,d\mu_d) &=& \underbrace{L_2(\C,d\mu) \otimes L_2(\C,d\mu)\ldots
 \otimes L_2(\C,d\mu)}_{d \ \mathrm{times}}\ ,\non
  \\
  F^2(\C^d) &=& \underbrace{F^2(\C) \otimes F^2(\C)\ldots  \otimes
  F^2(\C)}_{d \  \mathrm{times}}\ ,
\end{eqnarray}
where the Fock space $F^2(\C^d)$ is the closed subspace of $L_2(\C^d,d\mu_d)$, which consists of analytic functions in $d$ complex variables.
Given a multi-index $\mathbf{k}=(k_1,k_2,\ldots k_d)$, introduce the true-$\mathbf{k}$-Fock space $F^2_{(\mathbf{k})}(\C^d)$ as the
following tensor product of  the true-poly-Fock spaces over~$\C$
\begin{equation*} 
  F^2_{(\mathbf{k})}(\C^d) = \underbrace{F^2_{(k_1)}(\C) \otimes
  \ldots \otimes F^2_{(k_d)}(\C)}_{d\  \mathrm{times}}\, .
\end{equation*}
In particular, for the multi-index $\mathbf{1}=(1,1,\ldots 1)$, we have
\begin{equation*}
  F^2_{\mathbf{1}}(\C^d) = F^2_{(\mathbf{1})}(\C^d) = F^2(\C^d) .
\end{equation*}

The above implies that the orthogonal projection $P_{(\mathbf{k})} : \, L_2(\C^d,d\mu_d) \rightarrow F^2_{(\mathbf{k})}(\C^d)$ has the form
\begin{equation} \label{P_(bf-k)}
 P_{(\mathbf{k})} = P_{k_1} \otimes \ldots \otimes P_{k_d},
\end{equation}
where each $P_{k_j}$ is given by \eqref{P_(k)}.

\begin{theorem}[\cite{Vasilev00}] \label{th:+}
The space $L_2(\C^d,d\mu_d)$ admits the following decomposition
\begin{equation*}
   L_2(\C^d,d\mu_d) = \bigoplus_{|\mathbf{k}|=1}^{\infty}
   F^2_{(\mathbf{k})}(\C^d)\ .
\end{equation*}
\end{theorem}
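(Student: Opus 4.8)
The plan is to derive Theorem~\ref{th:+} from the one-variable decomposition (Proposition~\ref{co:+}) by a standard tensor-product argument. First I would recall that, by definition, $L_2(\C^d,d\mu_d)$ is the $d$-fold Hilbert-space tensor product of copies of $L_2(\C,d\mu)$, and that by Proposition~\ref{co:+} each factor decomposes as an orthogonal direct sum $L_2(\C,d\mu)=\bigoplus_{k=1}^{\infty}F^2_{(k)}(\C)$. The key general fact I would invoke is that the Hilbert tensor product of orthogonal direct sums is the orthogonal direct sum of the tensor products of the summands: if $H_j=\bigoplus_{k}H_{j,k}$ for $j=1,\dots,d$, then $H_1\otimes\cdots\otimes H_d=\bigoplus_{(k_1,\dots,k_d)}H_{1,k_1}\otimes\cdots\otimes H_{d,k_d}$, with the summands mutually orthogonal. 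Applying this with $H_{j,k}=F^2_{(k)}(\C)$ and using the definition $F^2_{(\mathbf{k})}(\C^d)=F^2_{(k_1)}(\C)\otimes\cdots\otimes F^2_{(k_d)}(\C)$ gives precisely the asserted decomposition, the sum being indexed by all multi-indices $\mathbf{k}=(k_1,\dots,k_d)$ of positive integers, equivalently grouped by the value of $|\mathbf{k}|=k_1+\cdots+k_d$ running from $d$ (not $1$, unless one reads the displayed lower limit loosely) to $\infty$.

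The steps in order would be: (i) state the tensor-product-of-direct-sums lemma for Hilbert spaces, or simply cite it as standard; (ii) check mutual orthogonality of the spaces $F^2_{(\mathbf{k})}(\C^d)$ directly, which follows factor-by-factor from the orthogonality of the $F^2_{(k)}(\C)$ established via Proposition~\ref{a_a+} (or Proposition~\ref{co:+}) in each variable: $\langle u_1\otimes\cdots\otimes u_d,\,v_1\otimes\cdots\otimes v_d\rangle=\prod_j\langle u_j,v_j\rangle$ vanishes as soon as the multi-indices differ in some coordinate; (iii) verify completeness, i.e.\ that the algebraic span of $\bigcup_{\mathbf{k}}F^2_{(\mathbf{k})}(\C^d)$ is dense in $L_2(\C^d,d\mu_d)$, which follows because finite linear combinations of elementary tensors $u_1\otimes\cdots\otimes u_d$ are dense and each $u_j$ is approximated, by Proposition~\ref{co:+}, by finite sums of its true-poly-Fock components; (iv) conclude the orthogonal direct sum decomposition. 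Alternatively, and perhaps more in the spirit of the paper, one can exhibit the orthogonal projections $P_{(\mathbf{k})}=P_{k_1}\otimes\cdots\otimes P_{k_d}$ from \eqref{P_(bf-k)}, note that each is the tensor product of orthogonal projections hence an orthogonal projection onto $F^2_{(\mathbf{k})}(\C^d)$, that $P_{(\mathbf{k})}P_{(\mathbf{k}')}=0$ for $\mathbf{k}\neq\mathbf{k}'$ because some factor $P_{k_j}P_{k_j'}=0$, and that $\sum_{\mathbf{k}}P_{(\mathbf{k})}=\bigotimes_j\big(\sum_{k}P_k\big)=\bigotimes_j I=I$ in the strong operator topology, the interchange of the (strongly convergent) sums with the tensor product being justified on elementary tensors and then extended by density and uniform boundedness of partial sums.

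I do not expect a serious obstacle here; the result is essentially a bookkeeping consequence of Proposition~\ref{co:+} and the behaviour of Hilbert tensor products under orthogonal decompositions. The one point that deserves a line of care is the convergence issue in step (iv)/(the projection argument): a tensor product of countably many strongly convergent series of projections must be handled on the dense set of elementary (finite) tensors first, using $\|P_{k_1}\otimes\cdots\otimes P_{k_d}\|\le 1$ and the finite-dimensional combinatorics of the index set $\{|\mathbf{k}|\le N\}$, and only then passed to general elements of $L_2(\C^d,d\mu_d)$ by an $\varepsilon/3$-type estimate; this is routine but is the only place where one is not simply quoting a one-variable statement. A secondary, purely cosmetic, point is to make sure the index range in the displayed sum is read correctly: since each $k_j\ge 1$ one has $|\mathbf{k}|\ge d$, so the natural statement is $\bigoplus_{\mathbf{k}:\,k_j\ge1}F^2_{(\mathbf{k})}(\C^d)$, which the paper writes with a lower summation limit that should be understood as ranging over all admissible multi-indices.
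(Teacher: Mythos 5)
Your argument is correct and is exactly the route the paper intends: the theorem is stated with a citation to Vasilevski's poly-Fock paper and no proof is given here, but since the paper defines both $L_2(\C^d,d\mu_d)$ and $F^2_{(\mathbf{k})}(\C^d)$ as $d$-fold Hilbert tensor products, the decomposition is precisely the tensor-product-of-orthogonal-direct-sums bookkeeping you carry out from the one-variable Proposition \ref{co:+}, with the orthogonality and completeness checks you describe. Your remark about the index range is also well taken: since each $k_j\ge 1$ forces $|\mathbf{k}|\ge d$, the displayed lower limit $|\mathbf{k}|=1$ must be read as ranging over all multi-indices $\mathbf{k}\in\mathbb{N}^d$.
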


Introduce the $(2d+1)$-dimensional Heisenberg algebra $\mathbb{H}_{2d+1} = \{ \ag,\ag^\dag,1 \}$ with commutator $[\ag_{i}, \ag^\dag_j]\ = \ \delta_{ij}\,I,\ i,j = 1,2,\ldots,d\ ,\ [\ag_{i}, \ag_j]=[\ag^\dag_{i}, \ag^\dag_j]=0$ and $[\ag_{i}, 1]=[\ag^\dag_{i}, 1]=0$. Its representation on $L_2(\C^d,d\mu_d)$ is given by
$d$ pairs of raising and lowering operators related to different $z_j$ in $z=(z_1,z_2,\ldots,z_d) \in \C^d$:
\begin{equation*}
 \ag^\dag_j = {\bar z}_j -\frac{\pa}{\pa {z}_j}, \qquad \ag_j = \frac{\pa}{\pa {\bar z}_j},
\end{equation*}
the identity operator $I$, with $[\ag_j,\ag^\dag_j]=I$, $j = 1,2,\ldots,d$, and the closed infinite dimensional space of vacuum vectors $\mathcal{H}_1$ which is given by
\begin{equation*}
 \mathcal{H}_1 = \bigcap_{j=1}^d \mathrm{ker}\, \ag_j = F^2(\C^d)\ .
\end{equation*}
For each multi-index $\mathbf{k}=(k_1,k_2,\ldots k_d)$ the true-$\mathbf{k}$-Fock space $F^2_{(\mathbf{k})}(\C^d)$ again is isomorphic to the space of vacuum vectors $\mathcal{H}_1 = F^2(\C^d)$ and is the result of its ``lifting up'' by the normalised product of the raising operators $(\ag^\dag_1)^{k_1-1}(\ag^\dag_2)^{k_2-1}\cdots (\ag^\dag_d)^{k_d-1}$. Corollary \ref{true-k_rep} implies now
\begin{lemma}
 Given a multi-index $\mathbf{k}=(k_1,k_2,\ldots k_d)$, each function $\psi(z,\overline{z})$ from the true-$\mathbf{k}$-Fock space $F_{(\mathbf{k})}^2(\C^d)$ is uniquely defined by a function $\varphi(z) \in F^2(\C^d)$ and has the form
\begin{equation} \label{eq:F_(k)-d}
  \psi(z)=\psi(z,\overline{z})=\left(\prod_{p=1}^d\sum_{m_p=0}^{k_p-1} (-1)^{m_p}
  \frac{\sqrt{(k_p-1)!}}{m_p!\,(k_p-1-m_p)!}\,
  \overline{z_p}^{\,k_p-1-m_p} \frac{\partial^{m_p}}{\partial z_p^{m_p}}\right)\varphi(z),
\end{equation}
moreover
\begin{equation*}
  \|\psi\|_{F_{(\mathbf{k})}^2(\C^d)} = \|\varphi\|_{F^2(\C^d)}.
\end{equation*}
\end{lemma}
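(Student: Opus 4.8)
The plan is to obtain the statement directly from Corollary~\ref{true-k_rep} by exploiting the tensor-product structure of the spaces over $\C^d$. First I would recall that, by definition,
\[
  F^2_{(\mathbf{k})}(\C^d) = F^2_{(k_1)}(\C) \otimes \cdots \otimes F^2_{(k_d)}(\C)\ ,
\]
and likewise that a general element of $F^2(\C^d) = F^2(\C)\otimes\cdots\otimes F^2(\C)$ can be approximated by finite linear combinations of elementary tensors $\varphi_1(z_1)\cdots\varphi_d(z_d)$ with each $\varphi_p \in F^2(\C)$. On such an elementary tensor, Corollary~\ref{true-k_rep} applies in each variable separately: the isometric isomorphism $\mathbf{A}_{(\mathbf{k})} : F^2(\C^d) \to F^2_{(\mathbf{k})}(\C^d)$ is precisely the tensor product $\mathbf{A}_{(k_1)} \otimes \cdots \otimes \mathbf{A}_{(k_d)}$ of the one-variable isometries, and each factor is given explicitly by the finite sum in \eqref{eq:F_(k)}. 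Multiplying these $d$ sums together yields exactly the product-of-sums expression \eqref{eq:F_(k)-d}, with the $\overline{z_p}^{\,k_p-1-m_p}$ and $\partial^{m_p}/\partial z_p^{m_p}$ acting only in the $p$-th slot.

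Next I would address uniqueness and the norm identity. Uniqueness of the representing $\varphi$ follows because $\mathbf{A}_{(\mathbf{k})}$ is an isomorphism: given $\psi \in F^2_{(\mathbf{k})}(\C^d)$, the function $\varphi = \mathbf{A}_{(\mathbf{k})}^{-1}\psi \in F^2(\C^d)$ is determined, and one may even write it down via the inverse maps $\mathbf{A}_{(k_p)}^{-1} = \frac{1}{\sqrt{(k_p-1)!}}\,\ag_p^{\,k_p-1}$ of Corollary~\ref{A_k}. The norm equality $\|\psi\|_{F^2_{(\mathbf{k})}(\C^d)} = \|\varphi\|_{F^2(\C^d)}$ is immediate once one knows $\mathbf{A}_{(\mathbf{k})}$ is a tensor product of isometries: a tensor product of isometries between Hilbert spaces is again an isometry, so it suffices to cite the one-variable norm identity from Corollary~\ref{true-k_rep} (equivalently, from Proposition~\ref{a_a+} and Corollary~\ref{A_k}) in each factor and take the product of the squared norms.

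The one genuine point requiring a little care — and the step I expect to be the main (albeit mild) obstacle — is passing from elementary tensors to arbitrary elements of $F^2(\C^d)$ and $F^2_{(\mathbf{k})}(\C^d)$, i.e. justifying that the formula \eqref{eq:F_(k)-d}, verified on the algebraic tensor product, extends by continuity/linearity to the whole Hilbert space. Here I would invoke that the finite linear span of elementary tensors is dense in $F^2(\C^d)$, that the operator displayed in \eqref{eq:F_(k)-d} coincides with the bounded (indeed isometric) operator $\mathbf{A}_{(\mathbf{k})}$ on that dense set, and hence that the two agree everywhere; a parallel density argument on the target side identifies the range with all of $F^2_{(\mathbf{k})}(\C^d)$. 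Finally, I would note that the orthogonal-projection formula \eqref{P_(bf-k)}, $P_{(\mathbf{k})} = P_{k_1}\otimes\cdots\otimes P_{k_d}$, could alternatively be used to give $\varphi$ explicitly from a general $\psi \in L_2(\C^d,d\mu_d)$, but for the lemma as stated the tensor-product isometry argument is the cleanest route and requires no new computation beyond Corollary~\ref{true-k_rep}.
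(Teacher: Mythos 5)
Your proposal is correct and follows the same route the paper intends: the paper offers no separate argument beyond asserting that Corollary \ref{true-k_rep} implies the lemma via the tensor-product definition of $F^2_{(\mathbf{k})}(\C^d)$, which is exactly the factor-by-factor application of the one-variable isometry $\mathbf{A}_{(k_p)}$ that you carry out (and you additionally make explicit the density/continuity step for passing from elementary tensors to general elements, which the paper leaves implicit).
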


True-$\mathbf{k}$-Fock spaces, being the ``elementary pieces'' in the construction of the forthcoming poly-Fock spaces,  can be obviously rearranged (in
infinitely many ways) to various sets of poly-analytic spaces, so that the closure of their union will give $L_2(\C^d,d\mu_d)$. At this stage the question on the most appropriate such rearrangements of the true-$\mathbf{k}$-Fock spaces naturally appears.

Following the pattern of the one-dimensional situation of the previous section, it is quite natural to define each ``distinguished'' set of poly-Fock spaces in a Lie-algebraic way, i.e., as the infinite system of the representation spaces for the action of generators of a certain Lie algebra $\mathfrak{g}$.

\subsection{Primary case: homogeneous poly-analytic functions} \label{homog}

In this case the algebra $\mathfrak{sl}(d+1)$ plays a role of the algebra $\mathfrak{g}$. The simplest (symmetric) representation of the
$\mathfrak{sl}(d+1)$-algebra given by the following combination of the raising and lowering operators $\ag^\dag_{j,i}$ and $\ag_{j,i}$, $i =1,2,\ldots,d$, see~\cite{t-sl22},
\begin{eqnarray} \label{sl(d+1)}
 J_i^- &=& \ag_i = \frac{\pa}{\pa {\bar z}_i}\ ,\qquad \quad
i=1,2,\ldots , d\
, \non\\
 J_{i,j}^0 &=& \ag^\dag_i \ag_j = \left({\bar
z}_i -\frac{\pa}{\pa {z}_i}\right)\frac{\pa}{\pa {\bar z}_j}\ , \qquad i,j=1,2,\ldots , d \
,
 \\
J_i^+ &=&  \left({\bar
z}_i -\frac{\pa}{\pa {z}_i}\right) \left(\sum_{j=1}^{d}\left({\bar
z}_j -\frac{\pa}{\pa {z}_j}\right)\frac{\pa}{\pa {\bar z}_j}-kI\right), \quad
i=1,2,\ldots , d \ . \non
\end{eqnarray}

For any real $k$ these operators obey $\mathfrak{sl}(d+1)$-algebra commutation relations.
Restricting $k$ to positive integers, we define the $k$-homogeneous-Fock space $F^2_{k\textup{-}hom}(\C^d)$ as the (closed) subspace of the smooth functions in $L_2(\C^d,d\mu_d)$ satisfying the equation
\begin{equation} \label{J(m)}
 \prod_{m=0}^{k-1} \, \left(\sum_{i=1}^{d}\left({\bar z}_{i} -\frac{\pa}{\pa {z}_{i}}\right)\frac{\pa}{\pa {\bar z}_{i}}-mI \right)\, f \ =\ 0.
\end{equation}
As in section 3, we see that alternatively  $F^2_{k,hom}(\C^d)$ is the maximal (in a sense of Observation \ref{max-inv}) closed subspace of $L_2(\C^d,d\mu_d)$ being invariant under the action of the operators $J^+_i$, $J^0_{i,j}$, $J^-_i$, $i,j =1,2,...,d$ obeying $\mathfrak{sl}(d+1)$-algebra commutation relations.

As it easily seen,
\begin{equation} \label{poly-summ}
   F^2_{k,hom}(\C^d) =  \left\{\bigoplus  F^2_{(\mathbf{p})}(\C^d)\, : \ \mathbf{p} =(p_1,\ldots,p_d) \in \mathbb{N}^d \ \ \mathrm{with} \ \ |\mathbf{p}| \leq k  \right\}.
\end{equation}
Then each function $\varphi(z,\overline{z}) \in F^2_{k,hom}(\C^d)$ is uniquely defined by   $C^{k-1}_{d+k-1} = \frac{(d_k-1)!}{d!(k-1)!}$ functions from $F^2(\C^d)$ and admits the representation
\begin{equation*}
 \varphi(z,\overline{z}) = \sum_{|\mathbf{m}|=0}^{k-1} \overline{z}^{\mathbf{m}} f_{\mathbf{m}}(z_1,...,z_d),
\end{equation*}
where all $f_{\mathbf{m}}(z_1,...,z_d)$ are analytic functions on $z_1,\, ...,\, z_d$ and
their explicit form in terms of $f_{\mathbf{m}}$ can be given using \eqref{eq:F_(k)-d}.

Furthermore there is an isometric isomorphism
\begin{equation*}
 F^2_{k,hom}(\C^d) \ \cong \ \mathbb{R}^{C^{k-1}_{d+k-1}}\otimes F^2(\C^d),
\end{equation*}
and the orthogonal projection $P_{k,hom}: \, L_2(\C^d,d\mu_d) \rightarrow F^2_{k,hom}(\C^d)$ has the form
\begin{equation*}
 P_{k,hom} = \left\{\bigoplus  P_{(\mathbf{p})}\, : \ \mathbf{p} =(p_1,\ldots,p_d) \in \mathbb{N}^d \ \ \mathrm{with} \ \ |\mathbf{p}| \leq k  \right\} ,
\end{equation*}
where each $P_{(\mathbf{p})}$ is given by \eqref{P_(bf-k)}.

Note that alternatively the $k$-homogeneous-Fock space $F^2_{k\textup{-}hom}(\C^d)$ can be defined as the (closed) subspace of all smooth functions in $L_2(\C^d,d\mu_d)$ satisfying the equations
\begin{equation*}
 \ag_1^{p_1}\cdots \ag_d^{p_d} f =\frac{\partial^{|\mathbf{p}|}}{\partial \overline{z_1}^{p_1}\cdots \partial \overline{z_d}^{p_d}} f = 0, \quad \mathrm{for \ all} \quad |\mathbf{p}| = k.
\end{equation*}

An analog of Corollary \ref{co:U} reads now as follows

\begin{corollary}
The set of $k$-homogeneous-Fock subspaces $F^2_{k,hom}(\C^d)$, $k\in \N$, of the space $L_2(\C^d,d\mu_d)$ forms an infinite flag in $L_2(\C^d,d\mu_d)$
\begin{equation} \label{flagd}
 F^2_{1,hom}(\C^d) \ \subset \ F^2_{2,hom}(\C^d)\ \subset \ ... \ \subset \ F^2_{k,hom}(\C^d) \subset \ ...  \subset \ L_2(\C^d,d\mu_d),
\end{equation}
and
\begin{equation*}
   L_2(\C^d,d\mu) = \overline{\bigcup_{k=1}^{\infty} F^2_{k,hom}(\C^d)}.
\end{equation*}
\end{corollary}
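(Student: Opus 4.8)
The plan is to deduce this exactly as Corollary~\ref{co:U} is deduced from Proposition~\ref{co:+} in one variable: everything follows formally from the orthogonal direct sum decomposition \eqref{poly-summ} together with Theorem~\ref{th:+}. By \eqref{poly-summ}, each $F^2_{k,hom}(\C^d)$ is the orthogonal sum of the true-$\mathbf{p}$-Fock spaces $F^2_{(\mathbf{p})}(\C^d)$ over a finite set $S_k$ of multi-indices $\mathbf{p}\in\mathbb{N}^d$; since the summands are mutually orthogonal and $S_k$ is finite, this sum is already a closed subspace. The whole corollary then reduces to two elementary facts about the family $\{S_k\}_{k\in\mathbb{N}}$: it is increasing, $S_k\subseteq S_{k+1}$, and it exhausts all admissible multi-indices, $\bigcup_{k\ge 1}S_k=\mathbb{N}^d$.

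First I would record the chain \eqref{flagd}. The inclusion $S_k\subseteq S_{k+1}$ is immediate (enlarging the admissible range of $|\mathbf{p}|$), and enlarging the index set of an orthogonal direct sum enlarges the subspace, so $F^2_{k,hom}(\C^d)\subseteq F^2_{k+1,hom}(\C^d)$. For properness one notes $S_{k+1}\setminus S_k\neq\varnothing$ while each $F^2_{(\mathbf{p})}(\C^d)$ is nonzero --- it is isometrically isomorphic to $F^2(\C^d)$ by the lemma above --- and is orthogonal to all of $F^2_{k,hom}(\C^d)$; the same argument, with $S_k\subsetneq\mathbb{N}^d$ and Theorem~\ref{th:+}, gives $F^2_{k,hom}(\C^d)\subsetneq L_2(\C^d,d\mu_d)$.

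Next I would prove the density statement. For any $\mathbf{p}\in\mathbb{N}^d$ one has $\mathbf{p}\in S_k$ once $k$ is large enough, hence $F^2_{(\mathbf{p})}(\C^d)\subseteq F^2_{k,hom}(\C^d)$, so the algebraic linear span of $\bigcup_{\mathbf{p}}F^2_{(\mathbf{p})}(\C^d)$ lies inside the linear span of $\bigcup_{k}F^2_{k,hom}(\C^d)$. Taking closures and invoking Theorem~\ref{th:+}, which asserts precisely that the closed span of $\bigcup_{\mathbf{p}}F^2_{(\mathbf{p})}(\C^d)$ is all of $L_2(\C^d,d\mu_d)$, we obtain
\[
 L_2(\C^d,d\mu_d)=\overline{\bigcup_{k=1}^{\infty} F^2_{k,hom}(\C^d)}.
\]

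There is no real obstacle here: all the analytic content sits in \eqref{poly-summ} and Theorem~\ref{th:+}, and what remains is the bookkeeping above. The only points worth a line of care are the identification of the index set $S_k$ --- i.e. checking, via the fact that the operator $\prod_{m=0}^{k-1}\bigl(\sum_i\ag^\dag_i\ag_i-mI\bigr)$ is diagonal on the decomposition $L_2(\C^d,d\mu_d)=\bigoplus_{\mathbf{p}}F^2_{(\mathbf{p})}(\C^d)$, that \eqref{J(m)} selects exactly the summands with multi-index in $S_k$ --- and the standing caveat that one must take the \emph{closure} of the union, since the union itself is a proper, non-closed subspace of $L_2(\C^d,d\mu_d)$.
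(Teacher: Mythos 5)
Your proposal is correct and follows exactly the route the paper intends: the corollary is stated without proof precisely because it is the immediate formal consequence of the orthogonal decomposition \eqref{poly-summ} together with Theorem~\ref{th:+}, just as Corollary~\ref{co:U} follows from Proposition~\ref{co:+} in one variable. Your final remark about identifying the index set $S_k$ by diagonalizing $\prod_{m=0}^{k-1}\bigl(\sum_i\ag^\dag_i\ag_i-mI\bigr)$ on $\bigoplus_{\mathbf{p}}F^2_{(\mathbf{p})}(\C^d)$ is exactly the right point of care (the eigenvalue of $\sum_i\ag^\dag_i\ag_i$ on $F^2_{(\mathbf{p})}$ is $|\mathbf{p}|-d$, so the relevant condition is $|\mathbf{p}|\le d+k-1$), and in any case one gets an increasing exhaustion of $\mathbb{N}^d$ by finite sets, which is all the argument needs.
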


Let us describe now the invariance properties of the $k$-homogeneous-Fock spaces $F^2_{k,hom}(\C^d)$.
\\
Recall that any motion of $\C^d$ is a combination of complex rotations ($z \mapsto w=\mathbf{u} z$, where $\mathbf{u} \in U(d)$ is a unitary matrix) and  parallel translations ($z \mapsto w=z+\mathbf{a}$, $\mathbf{a}= (a_1,\ldots,a_d) \in \C^d$). The corresponding operators, that act unitarily both on $L_2(\C^d,d\mu_d)$ and $F^2(\C^d)$, have the form $(U_{\mathbf{u}}f)(w)= f(\mathbf{u}^{-1}\,w)$ and the Weyl operator $(W_\mathbf{a}f)(w) = e^{\mathbf{a}\cdot w-\frac{1}{2}|\mathbf{a}|^2} f(w-\mathbf{a})$.

\begin{theorem}
 The operators $U_{\mathbf{u}}$, $\mathbf{u} \in U(d)$, and $W_\mathbf{a}$, $\mathbf{a} \in \C^d$ act unitarily on each $k$-homogeneous-Fock space $F^2_{k,hom}(\C^d)$.
\end{theorem}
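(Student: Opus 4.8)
The plan is to deduce the statement from the fact, already recorded above, that $U_{\mathbf{u}}$ ($\mathbf{u}\in U(d)$) and $W_{\mathbf{a}}$ ($\mathbf{a}\in\C^d$) act unitarily on the ambient space $L_2(\C^d,d\mu_d)$. Granting this, it suffices to prove the invariance $T(F^2_{k,hom}(\C^d))\subseteq F^2_{k,hom}(\C^d)$ for $T=U_{\mathbf{u}}$ and $T=W_{\mathbf{a}}$: once invariance is known, $T|_{F^2_{k,hom}(\C^d)}$ is isometric, and since the inverses $U_{\mathbf{u}}^{-1}=U_{\mathbf{u}^{-1}}$ (with $\mathbf{u}^{-1}\in U(d)$) and $W_{\mathbf{a}}^{-1}=W_{-\mathbf{a}}$ belong to the same families, applying the same invariance to them gives surjectivity, hence unitarity of the restriction. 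Invariance I would test using the description of $F^2_{k,hom}(\C^d)$ recalled above as the (closed) space of smooth functions in $L_2(\C^d,d\mu_d)$ annihilated by every monomial $\ag^{\mathbf{p}}=\ag_1^{p_1}\cdots\ag_d^{p_d}$ with $|\mathbf{p}|=k$; equivalently, every element has the explicit form $\sum_{|\mathbf{m}|\le k-1}\overline{z}^{\,\mathbf{m}}f_{\mathbf{m}}(z)$ with analytic coefficients $f_{\mathbf{m}}\in F^2(\C^d)$, so membership of $Tf$ can be checked by commuting $T$ past these operators.

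For the Weyl operators the key point is that $W_{\mathbf{a}}$ commutes with each lowering operator $\ag_j=\pa/\pa\overline{z}_j$: the gauge factor appearing in $W_{\mathbf{a}}$ is holomorphic in $w$ and hence killed by $\pa/\pa\overline{w}_j$, while $\pa/\pa\overline{w}_j(f(w-\mathbf{a}))=(\pa f/\pa\overline{z}_j)(w-\mathbf{a})$; this is the multivariable counterpart of the identity $W_a\ag^\dag W_a^{-1}=\ag^\dag$ used in the proof of Theorem~\ref{inv-C} (together with its adjoint). Consequently $\ag^{\mathbf{p}}W_{\mathbf{a}}=W_{\mathbf{a}}\ag^{\mathbf{p}}$ for every multi-index, so if $f\in F^2_{k,hom}(\C^d)$ then $\ag^{\mathbf{p}}(W_{\mathbf{a}}f)=W_{\mathbf{a}}(\ag^{\mathbf{p}}f)=0$ for all $|\mathbf{p}|=k$, i.e. $W_{\mathbf{a}}f\in F^2_{k,hom}(\C^d)$.

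For the unitary rotations $(U_{\mathbf{u}}f)(w)=f(\mathbf{u}^{-1}w)$ the components of $\mathbf{u}^{-1}w$ are linear, hence holomorphic, in $w$, so the chain rule yields a conjugation identity of the form $U_{\mathbf{u}}^{-1}\ag_j\,U_{\mathbf{u}}=\sum_{i=1}^{d}u_{ji}\,\ag_i$, a linear change of the mutually commuting generators $\ag_1,\dots,\ag_d$ by the unitary matrix $\mathbf{u}$. Hence conjugation by $U_{\mathbf{u}}$ carries each homogeneous degree-$k$ monomial $\ag^{\mathbf{p}}$, $|\mathbf{p}|=k$, to a homogeneous degree-$k$ polynomial in the $\ag_i$, i.e. a linear combination of the monomials $\ag^{\mathbf{q}}$ with $|\mathbf{q}|=k$; therefore $\ag^{\mathbf{p}}(U_{\mathbf{u}}f)=U_{\mathbf{u}}(U_{\mathbf{u}}^{-1}\ag^{\mathbf{p}}U_{\mathbf{u}})f=0$ whenever $f\in F^2_{k,hom}(\C^d)$, so $U_{\mathbf{u}}f\in F^2_{k,hom}(\C^d)$. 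Conceptually this is just the statement that the layer of total $\overline{z}$-degree at most $k-1$ is $U(d)$-stable; note that $U_{\mathbf{u}}$ genuinely mixes the individual true-$\mathbf{p}$-Fock spaces $F^2_{(\mathbf{p})}(\C^d)$, which is why one cannot argue component by component as in the inductive proof of Theorem~\ref{inv-C}. All the steps are routine computations; the only place that deserves attention is getting the unitary matrix, its transpose and its conjugate into the correct positions in the conjugation identity for $\ag_j$ and then observing that this change of the generators preserves the linear span of the degree-$k$ monomials in the $\ag_i$; beyond this bookkeeping, and the (immediate) assembly of the isometry-plus-surjectivity conclusion, I do not expect a genuine obstacle.
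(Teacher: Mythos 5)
Your proof is correct, but it takes a genuinely different route from the paper's. You work uniformly with the alternative characterization of $F^2_{k,hom}(\C^d)$ as the common kernel of all monomials $\ag_1^{p_1}\cdots\ag_d^{p_d}$ with $|\mathbf{p}|=k$ (which the paper does state, so you may use it), and you check how each symmetry conjugates the individual lowering operators: $W_{\mathbf{a}}$ commutes with every $\ag_j$ because the gauge factor is holomorphic, while $U_{\mathbf{u}}$ sends each $\ag_j$ to the linear combination $\sum_i u_{ji}\ag_i$, which preserves the linear span of the degree-$k$ monomials and hence their common kernel; your coefficient bookkeeping here is correct. The paper instead treats the two families asymmetrically: for $U_{\mathbf{u}}$ it conjugates the Euler--Cartan operators $\sum_i\ag_i^\dag\ag_i-mI$ from the defining equation \eqref{J(m)} and observes that the number operator is literally $U(d)$-invariant, while for $W_{\mathbf{a}}$ it factors $W_{\mathbf{a}}=W_{a_1}\otimes\cdots\otimes W_{a_d}$, uses the orthogonal decomposition \eqref{poly-summ} into true-$\mathbf{p}$-Fock spaces, and applies the one-dimensional Theorem \ref{inv-C} componentwise. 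Your remark that $U_{\mathbf{u}}$ mixes the individual spaces $F^2_{(\mathbf{p})}(\C^d)$, so the componentwise argument is unavailable for rotations, is exactly why the paper also abandons \eqref{poly-summ} for that case. What your version buys is a single mechanism for both families and an explicit final step (invariance under $T$ and under $T^{-1}$, which lies in the same family, upgrades the isometric restriction to a unitary one) that the paper compresses into ``it is sufficient to check that the operators preserve the spaces.''
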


\begin{proof}
 It is sufficient to check that the operators $U_{\mathbf{u}}$ and $W_\mathbf{a}$ preserve the spaces $F^2_{k,hom}(\C^d)$.  For the operator $U_{\mathbf{u}}$ it follows from the easily verified connection between the operators $J^{(m)}_0$, $m=0,1,\ldots,k-1$, defining $F^2_{k,hom}(\C^d)$ (see \eqref{J(m)}):
 \begin{equation*}
 U_{\mathbf{u}}\left(\sum_{i=1}^{d}\left({\bar
z}_{i} -\frac{\pa}{\pa {z}_{i}}\right)\frac{\pa}{\pa {\bar z}_{i}}-mI\right) U_{\mathbf{u}}^{-1} = \sum_{i=1}^{d} \left({\bar
w}_{i} -\frac{\pa}{\pa {w}_{i}}\right)\frac{\pa}{\pa {\bar w}_{i}}-m I .
\end{equation*}

For the operator $W_\mathbf{a}$ it follows from
\begin{equation*}
 W_\mathbf{a} = W_{a_1} \otimes \cdots \otimes W_{a_d},
\end{equation*}
equality \eqref{poly-summ} and Theorem \ref{inv-C}.
\end{proof}

That is, the unitary operators associated with a motion of the space $\C^d$ preserve flag \eqref{flagd} of $k$-homogeneous-Fock spaces $F^2_{k,hom}(\C^d)$. Moreover
the $k$-homogeneous-Fock spaces $F^2_{k,hom}(\C^d)$ do not depend on the chose of the Cartesian coordinates in $\C^d$. This justifies the adjective \emph{homogeneous} in their definition.

\subsection{$\mathbf{m}$-quasi-homogeneous-poly-analytic functions} \label{quasi-homog}

We start with with a tuple
$$\mathbf{m} = (m_1,m_2,\ldots,m_n)$$
of natural numbers whose sum is equal to $d$, $m_1 +m_2+\ldots +m_n=d$.
Among all possible tuples $\mathbf{m}$ there are two extreme cases:  $\mathbf{m} = (1,1,\ldots,1)$ with $n=d$, and $\mathbf{m} = (d)$ with $n=1$.

For a given tuple $\mathbf{m}$ the complex space $\C^{d}$ can be written as
\[
   \C^d\ =\ \C^{m_1} \oplus \C^{m_2}\oplus \ldots  \oplus \C^{m_n}\ ,
\]
whose points $z=(z_1,\ldots,z_d) \in \C^d$ are arranged in $n$ groups
\begin{equation*}
 z\ =\ (z_{(1)},z_{(2)},\ldots,z_{(n)}),\quad \mathrm{where} \quad z_{(j)}\ =\  (z_{j,1},\ldots ,z_{j,m_j}) \in \C^{m_j} \ .
\end{equation*}
We will use the same arrangement for
\begin{equation*}
 \mathbb{\ag}\ =\ (\ag_1, \ag_2,\ldots,\ag_d) = (\ag_{(1)},\ag_{(2)},\ldots,\ag_{(n)}) \quad \mathrm{and} \quad \ag^\dag = (\ag^\dag _1, \ag^\dag _2,\ldots,\ag^\dag _d) = (\ag^\dag_{(1)},\ag^\dag_{(2)},\ldots,\ag^\dag_{(n)})\ .
\end{equation*}

Correspondingly, the $d$-dimensional Fock space $F^2(\C^d)$ is decomposed as
\begin{equation*}
 F^2(\C^d)\ =\ F^2(\C^{m_1}) \otimes F^2(\C^{m_2})\otimes \ldots  \otimes F^2(\C^{m_n})\ .
\end{equation*}

Then for each tuple as $\mathbf{k}=(k_1,\ldots,k_n)$, where $k_j$'s are positive integers, we define $\mathbf{k}$-$\mathbf{m}$-quasi-homogeneous-Fock space $F^2_{\mathbf{k}\textup{-}\mathbf{m}\textup{-}q\textup{-}hom}(\C^d)$ as the set of all functions satisfying the equations
\begin{equation} \label{k-m-poly}
 \prod_{\ell=0}^{k_j-1}\left(\sum_{i=1}^{m_j}\ag^\dag_{j,i}\ag_{j,i} - \ell I\right)f\ =\ 0, \quad j=1,2,\ldots,n\ .
\end{equation}
The spaces thus defined are invariant for the action of the algebra
\begin{equation*}
 \mathfrak{g} = \mathfrak{sl}(m_1+1) \otimes  \ldots  \otimes \mathfrak{sl}(m_n+1),
\end{equation*}
where the representation of each $\mathfrak{sl}(m_j+1)$, $j=1,2,\ldots, n$, is given by the following combination of $\ag^\dag_{j,i}$ and $\ag_{j,i}$, $i =1,2,\ldots,m_j$
\begin{eqnarray*}
 J_{j,i}^- &=& \ag_{j,i} = \frac{\pa}{\pa {\bar z}_{j,i}}\ ,\qquad \quad
i=1,2,\ldots , m_j\
, \non\\
 J_{j,i,\ell}^0 &=&  \left({\bar
z}_{j,i} -\frac{\pa}{\pa {z}_{j,i}}\right)\frac{\pa}{\pa {\bar z}_{j,\ell}}\ , \qquad i,\ell=1,2,\ldots , m_j \
, \non\\
 J_{j,i}^+ &=& \left({\bar
z}_{j,i} -\frac{\pa}{\pa {z}_{j,i}}\right) \left(\sum_{\ell=1}^{m_j}\left({\bar
z}_{j,\ell} -\frac{\pa}{\pa {z}_{j,\ell}}\right)\frac{\pa}{\pa {\bar z}_{j,\ell}}-k_jI\right), \quad
i=1,2,\ldots , m_j \ . \non
\end{eqnarray*}

For the case $\mathbf{m} = (1,1,\ldots,1)$ the algebra $\mathfrak{g}\ =\ \mathfrak{sl}(2)^{\otimes d}$ occurs, while for the another extreme case $\mathbf{m}= (d)$ the corresponding algebra is $\mathfrak{g}\ =\ \mathfrak{sl}(d+1)$, and this is the case considered in Subsection~4.1. Each intermediate case, defined by a tuple $\mathbf{m} = (m_1,m_2,\ldots,m_n)$, corresponds to the algebra $\mathfrak{g} = \mathfrak{sl}(m_1+1) \otimes  \ldots  \otimes \mathfrak{sl}(m_n+1)$.

Alternative to \eqref{k-m-poly} equations that define the $\mathbf{k}$-$\mathbf{m}$-quasi-homogeneous-Fock space $F^2_{\mathbf{k}\textup{-}\mathbf{m}\textup{-}q\textup{-}hom}(\C^d)$ are as follows
\begin{equation*}
 \ag_{j,1}^{p_1}\cdots \ag_{j,m_j}^{p_{k_j}} f =\frac{\partial^{|\mathbf{p}|}}{\partial \overline{z}_{j,1}^{p_1}\cdots \partial \overline{z}_{j,m_j}^{p_{k_j}}} f = 0, \quad \mathrm{for \ all} \quad |\mathbf{p}| = k_j, \quad j=1,2,\ldots,n.
\end{equation*}

Note that the classes of $\mathbf{m}$-poly-analytic spaces for both extreme cases were defined in different contexts in \cite{Avanissian,Balk} for $\mathbf{m} = (1,1,\ldots,1)$,  and in \cite{Egor,Youssfi} for $\mathbf{m}= (d)$\,, respectively.

\section*{Acknowledgments}

\bigskip

A.V. Turbiner was supported in part by DGAPA grant {\bf IN113819}~(Mexico). N.L. Vasilevski thanks CONACyT grants 280732 and FORDECYT-PRONACES/61517/2020 (Mexico) for partial support.

\bigskip

\end{document}